\documentclass[11pt,a4paper]{article}
\usepackage{amsfonts,amsgen,amstext,amsbsy,amsopn,amsfonts,amssymb,amscd}
\usepackage[leqno]{amsmath}
\usepackage[amsmath,amsthm,thmmarks]{ntheorem}
\usepackage{epsf,epsfig}
\usepackage{float}
\usepackage{dsfont}
\usepackage{ebezier,eepic}
\usepackage{color}
\usepackage{tikz}
\usepackage{multirow}
\usepackage{mathrsfs}
\usepackage{graphicx}
\usepackage{subfigure}
\newtheorem{thm}{Theorem}[section]

\newtheorem{prop}[thm]{Proposition}
\newtheorem{prob}[thm]{Problem}

\newtheorem{lem}[thm]{Lemma}
\newtheorem{example}[thm]{Example}
\newtheorem{false statement}{False statement}
\newtheorem{cor}[thm]{Corollary}

\theoremstyle{definition}

\newtheorem{claim}[thm]{Claim}

\makeatletter \@addtoreset{equation}{section}

\def\hh{\mathcal{H}}

\def\hl{\mathcal{L}}
\def\hht{\mathcal{T}}

\def\hf{\mathcal{F}}
\def\hg{\mathcal{G}}
\def\hk{\mathcal{K}}
\def\ha{\mathcal{A}}
\def\hb{\mathcal{B}}
\def\hs{\mathcal{S}}
\def\hr{\mathcal{R}}

\def\hi{\mathcal{I}}

\def\hp{\mathcal{P}}

\begin{document}

\title{\bf\Large Intersecting families with covering number three II}
\date{}
\author{Peter Frankl$^1$, Jian Wang$^2$\\[10pt]
$^{1}$R\'{e}nyi Institute, Budapest, Hungary\\[6pt]
$^{2}$Department of Mathematics, Sichuan University, Chengdu, 610065, China.\\[6pt]
E-mail:  $^1$frankl.peter@renyi.hu, $^2$wangjianmath01@scu.edu.cn
}

\maketitle

\begin{abstract}
A family $\hf\subset \binom{[n]}{k}$ is called {\it intersecting} if $F\cap F'\neq \emptyset$ for all $F,F'\in \hf$. The {\it covering number} of a family $\hf$ is defined as the minimum size of $T\subset [n]$ such that $T\cap F\neq \emptyset$ for all $F\in \hf$. In 1980, the first author proved that for sufficiently large $n$, any intersecting $k$-graph $\mathcal{F}$ with covering number at least three, satisfies $|\mathcal{F}|\leq \binom{n-1}{k-1}-\binom{n-k}{k-1}-\binom{n-k-1}{k-1}+\binom{n-2k}{k-1}+\binom{n-k-2}{k-3}+3$.
There was very little progress during more than forty years but recently (cf. \cite{FW25}) with a completely
 different approach we proved the same result for the full range  $n\geq 2k$ and $k\geq 7$.  In this short paper we prove the same inequality for all the remaining cases.
\end{abstract}

\section{Introduction}

Let $[n]$ be the standard $n$-element set $\{1,2,\ldots,n\}$ and let $\binom{[n]}{k}$ denote the family of all $k$-element subsets of $[n]$. A family $\hf\subset \binom{[n]}{k}$ is called {\it intersecting} if $F\cap F'\neq \emptyset$ for all $F,F'\in \hf$.

One of the most important results in extremal set theory is the Erd\H{o}s-Ko-Rado Theorem.

\begin{thm}[Erd\H{o}s-Ko-Rado \cite{ekr}]
Let $\hf\subset \binom{[n]}{k}$ be an intersecting family with $n\geq 2k$. Then
\begin{align}\label{ineq-ekr}
|\hf|\leq \binom{n-1}{k-1}.
\end{align}
Moreover, for $n>2k$ the equality holds if and only if $\hf$ is a full star, that is, $\hf=\{F\in\binom{[n]}{k}\colon x\in F\}$ for some $x\in [n]$.
\end{thm}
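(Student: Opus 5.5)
I would prove the Erd\H{o}s--Ko--Rado bound \eqref{ineq-ekr} with Katona's cyclic permutation method, and then read the uniqueness statement for $n>2k$ off the equality case of the same argument.

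\textbf{Step 1: arcs on a single cycle.} Fix a cyclic ordering $\sigma$ of $[n]$. Call a $k$-set an \emph{arc} of $\sigma$ if it consists of $k$ consecutive elements in $\sigma$; there are exactly $n$ arcs. The key claim is that, for $n\geq 2k$, an intersecting family contains at most $k$ arcs of $\sigma$. To see this, suppose some arc $A=\{a_1,\dots,a_k\}$ (listed in cyclic order) belongs to $\hf$. Every other arc of $\hf$ meets $A$, so it starts or ends strictly inside $A$. For each $1\le i\le k-1$ there are two such arcs whose boundary falls between $a_i$ and $a_{i+1}$: the one ending at $a_i$ and the one starting at $a_{i+1}$. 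Because $n\geq 2k$, these two arcs are disjoint. Hence $\hf$ contains at most one arc from each of these $k-1$ pairs, and together with $A$ this gives at most $k$ arcs.

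\textbf{Step 2: double counting.} Count the pairs $(\sigma,F)$ with $F\in\hf$ an arc of $\sigma$. By Step 1 this number is at most $k\cdot(n-1)!$. On the other hand, each fixed $k$-set is an arc of exactly $k!(n-k)!$ cyclic orderings. Therefore
\[
|\hf|\,k!(n-k)!\leq k(n-1)!,
\qquad\text{that is,}\qquad |\hf|\leq \binom{n-1}{k-1}.
\]

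\textbf{Step 3: the equality case, $n>2k$.} This is the main obstacle. If equality holds, then every cyclic ordering contains exactly $k$ arcs of $\hf$. I first refine Step 1: when $n>2k$, any $k$ pairwise intersecting arcs of one cycle must share a common point, namely they are the $k$ arcs through a fixed element. The reason is that the chosen arcs form a contiguous run of starting points, and a spread-out choice would produce two disjoint arcs; this is exactly where $n>2k$ is needed. Since $k<n/2$, two $k$-sets that are consecutive arcs of the same cycle satisfy $|F\cap F'|=k-1$ and $F\cup F'$ has $k+1$ elements. Next I show that $\hf$ is closed under the shift $F\mapsto (F\setminus\{y\})\cup\{z\}$ whenever $F$ and the shifted set lie on a common cycle at the appropriate end of the run. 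Using this closure, one proves that all members of $\hf$ share a common element $x$. Finally, a common element together with $|\hf|=\binom{n-1}{k-1}$ forces $\hf$ to be the full star of $x$.

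A cleaner route for the final step would be to apply the Hilton--Milner bound. However, that result has not been stated earlier in the paper, so I would rely on the cycle-rigidity argument above. Alternatively, I could use shifting: equality is preserved under shifting, one verifies that a shifted extremal family is the star at $1$, and then one checks that un-shifting preserves the star structure when $n>2k$.
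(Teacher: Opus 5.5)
The paper does not prove this statement; Erd\H{o}s--Ko--Rado is quoted from the literature as background, so your proposal has to stand on its own. Steps 1 and 2 are the standard Katona argument and are correct. The rigidity claim at the start of Step 3 is also true for $n\ge 2k+1$, but your justification presupposes it, since ``the chosen arcs form a contiguous run'' is exactly the claim. The actual reason is this. Let $L_i$ be the arc ending at $a_i$ and $R_i$ the arc starting at $a_{i+1}$. The arcs $L_{i-1}$ and $R_i$ occupy $2k+1$ consecutive positions, so they are disjoint when $n\ge 2k+1$. Hence $R_i\in\hf$ forces $R_{i-1}\in\hf$. The chosen arcs are therefore $A,R_1,\dots,R_t,L_{t+1},\dots,L_{k-1}$, and all of them contain $a_{t+1}$.

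The genuine gap is the next step: getting from this per-cycle rigidity to a common element of all of $\hf$. You only announce a closure property (``at the appropriate end of the run'') and say that ``one proves'' a common element from it. Neither is made precise, and the step is not routine. If two cycles differ by swapping adjacent elements $u,v$, the shared arcs do not rule out that the run keeps its positions while its center changes from $u$ to $v$. So the center cannot simply be propagated from cycle to cycle.

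The missing idea is to exploit a \emph{failure} of closure. If $\hf$ were closed under all exchanges $F\mapsto (F\setminus\{y\})\cup\{z\}$, it would be all of $\binom{[n]}{k}$, since any two $k$-sets are linked by such exchanges. So there are $F\in\hf$, $y\in F$, $z\notin F$ with $G=(F\setminus\{y\})\cup\{z\}\notin\hf$. Write $F=\{y,f_2,\dots,f_k\}$ and consider any cycle $(y,f_2,\dots,f_k,z,w_1,\dots,w_{n-k-1})$. There $F$ and $G$ are consecutive arcs, so $F$ is the last arc of the run and the center is $y$. Hence all $k$ arcs through $y$ lie in $\hf$. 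Varying the orders of the $f_j$ and of the $w_j$ (using $n-k-1\ge k-1$) shows that every $k$-set containing $y$ and missing $z$ belongs to $\hf$. Now suppose some $H\in\hf$ missed $y$. Then $|[n]\setminus(H\cup\{y,z\})|\ge n-k-2\ge k-1$, which yields such a $k$-set disjoint from $H$, a contradiction. So $\hf$ lies in the star of $y$, and equality of sizes gives the full star. With this inserted, your Katona route is complete.

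Of your fallback routes, the Hilton--Milner one is legitimate if you accept that stronger theorem as a black box. The shifting route, however, is only asserted, and its un-shifting step is the non-trivial part.
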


For $\hf\subset \binom{[n]}{k}$ and $T\subset [n]$,
we call $T$ a {\it cover} of $\hf$ if $F\cap T\neq \emptyset$ for all $F\in \hf$. The {\it covering number} $\tau(\hf)$ is defined as the minimum size of a cover of $\hf$. We call $\hf$ a star if $\tau(\hf)=1$.

\begin{thm}[Hilton-Milner \cite{HM67}]
Suppose that $n> 2k\geq 4$, $\hf\subset\binom{[n]}{k}$ is intersecting and $\tau(\hf)\geq 2$, then
\begin{align}\label{ineq-hm}
|\hf| \leq \binom{n-1}{k-1}-\binom{n-k-1}{k-1}+1.
\end{align}
\end{thm}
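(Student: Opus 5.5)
We prove the Hilton--Milner bound \eqref{ineq-hm} by shifting followed by a counting argument anchored at a single set avoiding a fixed element.

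\textbf{Reduction by shifting.} The $(i,j)$-shift $S_{ij}$ preserves $|\hf|$ and the intersecting property. It may, however, destroy $\tau(\hf)\geq 2$. Our plan is therefore to shift repeatedly. If at some moment a shift would produce a star, we stop at the family $\hg$ just before that shift. Then $\hg$ is intersecting, $\tau(\hg)\geq 2$, $|\hg|=|\hf|$, and $S_{ij}(\hg)$ is a star. The only possible common element of $S_{ij}(\hg)$ is $i$, so every $G\in\hg$ meets $\{i,j\}$.

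\textbf{Argument in the stuck case.} Here we argue directly. Let $\hg_i=\{G\in\hg: i\in G,\ j\notin G\}$, $\hg_j=\{G\in\hg: j\in G,\ i\notin G\}$ and $\hg_{ij}=\{G\in\hg: i,j\in G\}$. Removing $i$ and $j$ gives families $\ha,\hb\subset\binom{[n]\setminus\{i,j\}}{k-1}$ from $\hg_i$ and $\hg_j$, respectively. These are cross-intersecting, and both are nonempty since $\tau(\hg)\ge 2$. We also have $|\hg_{ij}|\le\binom{n-2}{k-2}$. We then need the standard cross-intersecting bound: for nonempty cross-intersecting $\ha,\hb\subset\binom{[m]}{k-1}$ with $m=n-2\ge 2(k-1)$,
\[
|\ha|+|\hb|\le \binom{m}{k-1}-\binom{m-k+1}{k-1}+1.
\]
Adding the three parts gives
\[
|\hg|\le\binom{n-2}{k-2}+\binom{n-2}{k-1}-\binom{n-k-1}{k-1}+1=\binom{n-1}{k-1}-\binom{n-k-1}{k-1}+1.
\]
The cross-intersecting bound itself is proved by the same method: fix $B\in\hb$, note that every member of $\ha$ meets $B$, and handle the trade-off by Kruskal--Katona or by induction on $m$.

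\textbf{Fully shifted case.} If no such moment occurs, $\hf$ is shifted and still has $\tau\ge2$. For $n>2k$ there is then some $F\in\hf$ with $1\notin F$. Shiftedness gives $F_0=\{2,\dots,k+1\}\in\hf$. Every $G\in\hf$ containing $1$ must meet $F_0$, so there are at most $\binom{n-1}{k-1}-\binom{n-k-1}{k-1}$ such sets. It remains to show that the sets avoiding $1$ add at most $1$ after a trade-off. Let $\hf(\bar1)$ be the family of sets avoiding $1$. It is intersecting and lives on $[2,n]$. For each $G\in\hf(\bar1)$, no set $\{1\}\cup H$ with $H\cap G=\emptyset$ lies in $\hf$. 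This is again a cross-intersecting pair: $\hf(1)\subset\binom{[2,n]}{k-1}$ against $\hf(\bar1)\subset\binom{[2,n]}{k}$. The needed inequality is
\[
|\hf(1)|+|\hf(\bar1)|\le\binom{n-1}{k-1}-\binom{n-k-1}{k-1}+1
\]
for nonempty cross-intersecting families of sizes $k-1$ and $k$ on $n-1$ points.

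\textbf{Main obstacle.} The crux in both cases is this nonempty cross-intersecting inequality. We would prove it by Kruskal--Katona. The shadow-type family $\{[2,n]\setminus G\}$ has $(k-1)$-shadow that must avoid $\hf(1)$. Each extra set in $\hf(\bar1)$ beyond the first kills at least $\binom{n-k-2}{k-2}\ge1$ further sets of $\hf(1)$, and this uses $n>2k$. Equivalently, the function $x\mapsto x-(\text{shadow size})$ is maximized at $x=1$, which is where the Lovász form of Kruskal--Katona enters. The delicate point is checking that this monotonicity holds all the way to the top of the range.
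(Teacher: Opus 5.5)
The paper does not prove this theorem; it only quotes it from Hilton and Milner, so I am judging your argument on its own. Your shifting reduction and the ``stuck'' case are correct. There $\{i,j\}$ covers $\hg$, and the two link families are nonempty cross-intersecting $(k-1)$-uniform families on $n-2\ge 2(k-1)$ points. The bound you need is exactly Theorem~\ref{thm-ft92} with $a=b=k-1$, so you can cite it instead of relying on your sketch.

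The gap is in the fully shifted case: the inequality you reduce to is false. Take $\hf(1)=\{H\in\binom{[2,n]}{k-1}\colon 2\in H\}$ and $\hf(\bar 1)=\{G\in\binom{[2,n]}{k}\colon 2\in G\}$. Both are nonempty and cross-intersecting, and $\hf(\bar 1)$ is even intersecting, but
\[
|\hf(1)|+|\hf(\bar 1)|=\binom{n-2}{k-2}+\binom{n-2}{k-1}=\binom{n-1}{k-1}.
\]
This exceeds the Hilton--Milner bound by $\binom{n-k-1}{k-1}-1>0$. For the bare cross-intersecting problem with uniformities $k-1<k$ on $n-1$ points, the true maximum is even larger: it is $\binom{n-1}{k}-\binom{n-k}{k}+1$ by \eqref{ft92}.

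The example is simply the full star centred at $2$. You used $\tau(\hf)\ge 2$ only to get $\hf(\bar 1)\neq\emptyset$, which excludes the star at $1$; the fact that $\hf$ is not a star at any other point is lost in the reduction. Your Kruskal--Katona monotonicity fails for the same reason. One member of $\hf(\bar 1)$ already kills $\binom{n-k-1}{k-1}$ sets of $\hf(1)$. All $\binom{n-2}{k-1}$ members of the star at $2$ together kill only $\binom{n-2}{k-1}$ sets. So $x$ minus the number of killed sets equals $0$ at the top of the range, but $1-\binom{n-k-1}{k-1}<0$ at $x=1$. The ``delicate point'' you flagged is exactly where the argument breaks, and more careful counting cannot fix a false inequality.

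What is missing is an input that combines shiftedness with $\tau(\hf)\ge 2$. For instance, shiftedness forces $\hf(\bar 1)$ to be $2$-intersecting, which already rules out the example above. One clean way to finish is induction on $n$ (and $k$) for shifted families, with base cases $n=2k$ (EKR) and $k=2$ (the triangle). Let $n>2k$ and $k\ge 3$.
\begin{itemize}
\item[(i)] $\hf(\bar n)$ is shifted and intersecting. It is still not a star, since it contains $[2,k+1]$ and $[1,k+1]\setminus\{x\}$ for every $x\in[2,k+1]$. So induction gives $|\hf(\bar n)|\le\binom{n-2}{k-1}-\binom{n-k-2}{k-1}+1$.
\item[(ii)] $\hf(n)\subset\binom{[n-1]}{k-1}$ is intersecting by the usual shifting argument. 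If it is not a star, induction on $k$ bounds it by $\binom{n-2}{k-2}-\binom{n-k-1}{k-2}+1$. If it is a star, shiftedness forces all its members to contain $1$, and they must meet $[2,k+1]$.
\item[(iii)] In both cases of (ii), $|\hf(n)|\le\binom{n-2}{k-2}-\binom{n-k-2}{k-2}$. Adding this to the bound in (i) gives exactly the Hilton--Milner bound.
\end{itemize}
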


Since in an intersecting family $\hf\subset \binom{[n]}{k}$ every edge is a cover, $\tau(\hf)\leq k$ is obvious. In their seminal paper \cite{EL} among other things Erd\H{o}s and Lov\'{a}sz examined the maximal size $m(k)$ of an intersecting family of $k$-sets with covering number $k$. They proved
\begin{align}\label{ineq-1.3}
\lfloor k!(e-1)\rfloor \leq m(k) \leq k^k.
\end{align}

Both the lower and the upper bounds have been improved throughout the years (cf. \cite{F19}, \cite{Z}) but determining $m(k)$ exactly appears to be extremely difficult. The known values are $m(2)=3$ (trivial), $m(3)=10$ \cite{L}. For $k=4$ already there is a large gap:
\begin{align}
42 \leq r(4) \leq 64,
\end{align}
where the upper bound was proved very recently (cf \cite{FW25-2}).

In view of \eqref{ineq-hm} and \eqref{ineq-1.3} it is natural to consider the following general problem.

\begin{prob}
Let $1\leq r\leq k$, $n\geq 2k$. Determine or estimate
\[
m(n,k,r):=\max\left\{|\hf|\colon \hf\subset \binom{[n]}{k},\ \hf \mbox{ is intersecting with }\tau(\hf)\geq r\right\}.
\]
\end{prob}

The Erd\H{o}s-Ko-Rado and Hilton-Milner Theorems can be stated as $m(n,k,1)=\binom{n-1}{k-1}$, $m(n,k,2)=\binom{n-1}{k-1}-\binom{n-k-1}{k-1}+1$, respectively.

For a family $\hh\subset \binom{[n]}{k}$ and a positive integer $\ell$, define
\[
\hht(\hh)=\left\{T\subset [n]\colon |T|\leq k,\  T \mbox{ is a cover of }\hh\right\},\ \hht^{(\ell)}(\hh) =\hht(\hh) \cap \binom{[n]}{\ell}.
\]

We say that an intersecting family $\hf\subset \binom{[n]}{k}$ is {\it saturated} if any addition of an extra $k$-set would destroy the intersecting property.

\begin{prop}\label{prop-1.4}
If $n\geq 2k$ and $\hh\subset \binom{[n]}{k}$ is saturated  intersecting,  then $\hht(\hh)$ is intersecting as well.
\end{prop}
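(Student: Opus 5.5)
We argue by contradiction. Suppose $T_1,T_2\in\hht(\hh)$ with $T_1\cap T_2=\emptyset$, so $|T_1|,|T_2|\leq k$ and both are covers of $\hh$. The plan is to build a $k$-set $G\supseteq T_1$ that avoids $T_2$ and is not in $\hh$. Since $G$ contains the cover $T_1$, it meets every member of $\hh$. So $\hh\cup\{G\}$ would be intersecting, contradicting saturation. We may even put $G\in\hh$ on the list of things to avoid: if the construction forces $G\in\hh$, we derive the contradiction differently, as described below.

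First, extend $T_1$. We have $|[n]\setminus T_2|\geq n-k\geq k\geq |T_1|$, so we can choose a $k$-set $G$ with $T_1\subseteq G\subseteq [n]\setminus T_2$. Then $G$ meets every $H\in\hh$, because $T_1\subseteq G$ is a cover. If $G\notin\hh$, then $\hh\cup\{G\}$ is intersecting and strictly larger than $\hh$, which contradicts saturation. Hence $G\in\hh$.

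But now $G$ is a member of $\hh$, and $T_2$ is a cover of $\hh$, so $G\cap T_2\neq\emptyset$. This contradicts $G\subseteq [n]\setminus T_2$. Therefore any two covers in $\hht(\hh)$ intersect, that is, $\hht(\hh)$ is intersecting.

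The only delicate point is the existence of $G$, which needs $n-|T_2|\geq k$. This is exactly where the hypothesis $n\geq 2k$ together with $|T_2|\leq k$ is used. Note also that the case split "$G\in\hh$ or not" resolves immediately in both directions: one branch contradicts saturation and the other contradicts $T_2$ being a cover. So no genuine obstacle is expected.
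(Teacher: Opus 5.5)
Your proof is correct and rests on the same idea as the paper's: a $k$-set containing a cover must lie in $\hh$ by saturation. The paper extends both disjoint covers to disjoint $k$-sets $F\supseteq T$, $F'\supseteq T'$ and contradicts the intersecting property of $\hh$, whereas you extend only $T_1$ to a $k$-set avoiding $T_2$ and contradict the fact that $T_2$ is a cover. This is an equally valid and slightly more economical variant.
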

\begin{proof}
If $T,T' \in \hht(\hh)$ are disjoint then using $n\geq 2k$ we can find $F,F' \in {[n] \choose k}$ with $T\subset F$,
$T'\subset F'$ and $F \cap F' = \emptyset$.  By saturatedness both $F$ and $F'$ are in $\hh$, a contradiction.
\end{proof}

Let us describe a general construction of relatively large intersecting families with covering number $r$, $r\geq 3$.

\begin{example}
Let $\hh\subset \binom{[2,n]}{k}$ be intersecting and $\tau(\hh)= r-1$. Define
\[
\hf_{\hh} =\hh \cup \left\{F\in \binom{[n]}{k}\colon 1\in F,\ \exists\ T\in \hht(\hh),\ T\subset F\right\}.
\]
It should be clear that $\hf_\hh$ is intersecting and $\tau(\hf_\hh)\leq r$.
\end{example}

\begin{prop}
If $n\geq 2k$ and $\tau(\hht(\hh)\setminus \hht^{(k)}(\hh))\geq r$, then $\tau(\hf_\hh)=r$.
\end{prop}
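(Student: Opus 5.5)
Since the Example already gives $\tau(\hf_\hh)\leq r$, it suffices to show that no set $S\subset[n]$ with $|S|\leq r-1$ covers $\hf_\hh$. Fix such an $S$ and split into the cases $1\in S$ and $1\notin S$. In each case we construct a member of $\hf_\hh$ disjoint from $S$. Throughout, $r\geq 3$ and $n\geq 2k$, as in the construction.

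Case $1\in S$. Write $S'=S\setminus\{1\}\subset[2,n]$, so $|S'|\leq r-2$. Since $\hh\subset\binom{[2,n]}{k}$ and $\tau(\hh)=r-1>|S'|$, some $H\in\hh$ is disjoint from $S'$. As $1\notin H$, this $H$ is also disjoint from $S$. Since $H\in\hf_\hh$, $S$ is not a cover.

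Case $1\notin S$. Now $|S|\leq r-1<r\leq\tau(\hht(\hh)\setminus\hht^{(k)}(\hh))$, so there is a cover $T\in\hht(\hh)$ with $|T|\leq k-1$ and $T\cap S=\emptyset$. We may assume $1\notin T$ by replacing $T$ with $T\setminus\{1\}$, which still covers $\hh$ because $\hh\subset\binom{[2,n]}{k}$. Any $k$-set $F$ with $\{1\}\cup T\subset F$ lies in $\hf_\hh$. So we need to extend $\{1\}\cup T$, which has at most $k$ elements, to a $k$-set avoiding $S$. The number of available elements is
\[
|[n]\setminus S|\geq n-(r-1)\geq 2k-r+1.
\]
This is at least $k$ provided $r\leq k+1$, which holds since $\tau\leq k$ for any intersecting family. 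Hence such an $F$ exists, it is disjoint from $S$, and again $S$ is not a cover.

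The only point requiring care is the counting in the second case: we must check that $\{1\}\cup T$ fits inside a $k$-set avoiding $S$. The hypothesis excludes $k$-element covers precisely so that $|\{1\}\cup T|\leq k$. The bound $r\leq k$ is implicit in the setup, since $\hf_\hh$ is intersecting and so $\tau(\hf_\hh)\leq k$; if needed, I would state it explicitly. Combining the two cases gives $\tau(\hf_\hh)\geq r$, and hence $\tau(\hf_\hh)=r$.
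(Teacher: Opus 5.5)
Your proof is correct and follows the paper's argument. The case $1\in S$ is excluded because $S\setminus\{1\}$ would cover $\hh$ with fewer than $\tau(\hh)=r-1$ elements. Otherwise, a cover $T$ of $\hh$ with $|T|\le k-1$ and $T\cap S=\emptyset$ lets you extend $\{1\}\cup T$ to a member of $\hf_\hh$ that avoids $S$.

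You also make explicit the count $n-|S|\ge k$, which the paper leaves implicit. The bound you actually need, $r\le k+1$, follows from $\tau(\hh)=r-1\le k$; in fact $r\le k$ holds, since the hypothesis forces $\hh$ to have a cover of size at most $k-1$. Your closing appeal to $\tau(\hf_\hh)\le k$ would be circular as a justification of $r\le k$, but nothing in your argument depends on it.
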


\begin{proof}
Suppose for contradiction that $S$ is a cover of $\hf_\hh$ with $|S|=r-1$. Since $\hh\subset \hf_\hh$ and $\tau(\hh)=r-1$,
$S\subset [2,n]$ and $S\in \hht^{(r-1)}(\hh)$ follow.

On the other hand $\tau(\hht(\hh)\setminus \hht^{(k)}(\hh))\geq r$ implies the existence of $T\in \hht(\hh)\setminus \hht^{(k)}(\hh)$ with $T\cap S=\emptyset$. As $|\{1\}\cup T|\leq k$ there exists $F\in \hf$ with $F\cap S=\emptyset$, a contradiction.
\end{proof}

Let $[a,b]=\{i\colon a\leq i\leq b\}$ be the discrete interval.

\begin{example}
Define
\[
\hb = \{[2,k+1], \{2\}\cup [k+2,2k], \{3\}\cup [k+2,2k]\}
\]
and
\[
\ha =\left\{A\in \binom{[n]}{k}\colon 1\in A \mbox{ and } A\cap B\neq \emptyset \mbox{ for each }B\in \hb\right\}.
\]
Set $\hg(n,k)=\ha\cup \hb$.
\end{example}

It is easy to verify that for $n\geq 2k$, $\hg(n,k)$ is an intersecting $k$-graph with $\tau(\hg(n,k))=3$. Moreover,
\begin{align}\label{ineq-hgnk}
|\hg(n,k)| = \binom{n-1}{k-1}-\binom{n-k}{k-1}-\binom{n-k-1}{k-1}+\binom{n-2k}{k-1}+\binom{n-k-2}{k-3}+3.
\end{align}

The first author proved in \cite{F80} that for $k\geq 4$ and $n>n_0(k)$,
\begin{align}\label{ineq-frankl}
m(n,k,3)= |\hg(n,k)|.
\end{align}

Very recently, we proved the following:

\begin{thm}[\cite{FW2022}]
For $k\geq 7$ and $n\geq 2k$, $m(n,k,3)= |\hg(n,k)|$.
\end{thm}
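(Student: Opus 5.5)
The theorem just stated is quoted from \cite{FW2022}, so within this paper it needs no proof. What I outline is the structure of its proof in the range $k\geq 7$, $n\geq 2k$, which is also the template I would try to extend to the remaining cases $k\le 6$.

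First reduce to saturated families. Let $\hf$ be intersecting with $\tau(\hf)\geq 3$ and of maximal size; adding $k$-sets keeps $\tau\geq 3$, so we may assume $\hf$ is saturated. By Proposition~\ref{prop-1.4}, the family $\hht(\hf)$ of covers is then intersecting. Let $x$ be an element of maximum degree, and write
\[
\hf(x)=\{F\setminus\{x\}\colon x\in F\in\hf\},\qquad \hf(\bar x)=\{F\in\hf\colon x\notin F\}.
\]
Then $|\hf|=|\hf(x)|+|\hf(\bar x)|$. Since $\tau(\hf)\geq 3$, the family $\hf(\bar x)$ has covering number at least $2$, so it is non-trivial. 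Every member of $\hf(x)$ must meet every member of $\hf(\bar x)$, so $\hf(x)$ is contained in the family of $(k-1)$-sets avoiding $x$ that cover $\hf(\bar x)$.

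Next bound $|\hf(x)|$ by a function that decreases as $\hf(\bar x)$ grows richer, and split into cases on $\tau(\hf(\bar x))$ and on its structure. If $\hf(\bar x)$ contains three sets $B_1,B_2,B_3$ forming a ``triangle'' of the same type as $\hb$, inclusion--exclusion over the $(k-1)$-sets missing some $B_i$ gives exactly the count in \eqref{ineq-hgnk}, less the sets forced out by any further members of $\hf(\bar x)$. The key quantitative lemma is an exchange inequality: each additional member of $\hf(\bar x)$ beyond the optimal configuration destroys at least as many sets of $\hf(x)$ as it adds. I would prove this by an injection or a Kruskal--Katona type shadow bound applied to the link families.

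The remaining cases are those where $\hf(\bar x)$ is large, or where the maximum degree is small relative to $\binom{n-1}{k-1}$. For these I would use a degree argument: $|\hf|\le \frac{n}{k}\Delta$ is too weak, so instead use the Hilton--Milner theorem on $\hf(\bar x)$ together with a counting of pairs to show that $|\hf(\bar x)|$ large forces $\Delta$ small, hence $|\hf|$ below $|\hg(n,k)|$.

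The main obstacle is the regime $n$ close to $2k$. There the binomial differences in \eqref{ineq-hgnk} are not dominated by their leading terms, so every estimate must be exact rather than asymptotic. The hypothesis $k\ge 7$ is exactly what lets these numerical inequalities go through. For $k\le 6$ I would try to settle the tight configurations by a finite case analysis of the possible families $\hht^{(2)}(\hf)$, using the fact that these form an intersecting graph, that is, a star or a triangle.
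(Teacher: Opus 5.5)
You are right that this paper only quotes the statement from \cite{FW2022}, so no proof is owed here. But the outline you offer as ``the structure of its proof'' is neither a proof nor the structure the paper relies on. Everything rests on two steps that you only assert. The first is your ``exchange inequality'': every member of $\hf(\bar x)$ beyond a $\hb$-type triangle kills at least as many members of $\hf(x)$ as it adds. The second is the small-degree case. The case split itself is also unspecified. Nothing says why $\hf(\bar x)$, for a maximum-degree vertex $x$, should contain a $\hb$-type triangle, or what happens when $\hf(\bar x)$ is small but does not. The abstract stresses that going from $n>n_0(k)$ \cite{F80} to the full range $n\geq 2k$ required a completely different approach. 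Term-by-term trade-offs of your kind are exactly what becomes delicate when $n$ is near $2k$. Your sketch names that obstacle but supplies nothing to overcome it. Likewise, passing from a small maximum degree to a small $|\hf|$ is where the paper invokes Frankl's degree theorem (Theorem~\ref{thm-F87}). ``Hilton--Milner plus a count of pairs'' gives no indication of how to replace it.

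The route the paper relies on (from \cite{FW25}) goes through the $3$-covers. By Proposition~\ref{prop-1} and maximality one may take $\hf$ saturated with $\tau(\hf)=3$. Then $\hht^{(3)}(\hf)$ is non-empty and intersecting (Proposition~\ref{prop-1.4}). By Proposition~\ref{prop-3} it is a star, equals $\hk_3(4)$, or contains a copy of $\hs$ or $\hr$. The star case is Theorem~\ref{thm-3} and the $\hk_3(4)$ case is Proposition~\ref{prop-4}. In the remaining cases every $F\in\hf$ meets $U=[6]$ (resp.\ $[5]$) in at least two elements. One then bounds $|\hf|$ by summing the trace sizes $f_S$ using cross-intersecting estimates (\eqref{ft92}, Corollary~\ref{cor-hilton}, Lemma~\ref{lem-3.5}).

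The paper also notes that \cite{FW25} missed the case $n=2k+1$ with $\hs\subset\hht^{(3)}(\hf)$. Proposition~\ref{prop-2k+1} closes it. A non-star $\hht^{(3)}(\hf)$ gives every vertex a $3$-cover avoiding it, so $\Delta(\hf)\le\binom{n-1}{k-1}-\binom{n-4}{k-1}$, and Theorem~\ref{thm-F87} with $\ell=3$ finishes. So even for $k\ge 7$ the statement depends on that proposition.

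Finally, your plan for $k\le 6$ via $\hht^{(2)}(\hf)$ is vacuous: since $\tau(\hf)\ge 3$, $\hf$ has no $2$-covers at all. Nor is the graph of $2$-covers of $\hf(\bar x)$ intersecting in general; for instance $\{2,k+2\}$ and $\{3,k+3\}$ both cover $\hb$. The object that must be classified is the intersecting $3$-graph $\hht^{(3)}(\hf)$.
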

This  leaves the cases $k=4,5,6$ open.

Let us mention that Kupavskii \cite{K2024} proved the same result for  $k\geq 100$ and $n\geq 2k$ with a shorter proof.

In this note, we settle the remaining cases.

\begin{thm}\label{thm-main}
For $k=4, 5,6$ and $n\geq 2k$, $m(n,k,3)= |\hg(n,k)|$.
\end{thm}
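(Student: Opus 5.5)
We argue by induction on $n$, with the base case $n=2k$ handled separately. For $n=2k$ an intersecting family contains at most one set from each complementary pair, so $|\hf|\le \frac12\binom{2k}{k}=\binom{2k-1}{k-1}$. We then compare this with \eqref{ineq-hgnk} at $n=2k$, where $\binom{n-2k}{k-1}=0$, $\binom{n-k}{k-1}=k$ and $\binom{n-k-1}{k-1}=1$. The bound $\binom{2k-1}{k-1}$ is larger than $|\hg(2k,k)|$, so this is not enough on its own. We therefore use the covering-number condition: since $\tau(\hf)\ge 3$, for every pair $\{x,y\}$ some complementary pair with both members avoiding $\{x,y\}$ must be taken on a specific side. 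This gives a small explicit deficit that can be checked directly, possibly by a short computer-free enumeration for $k=4,5,6$.

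For the main range we follow the shifting approach. Assume $\hf$ is saturated, so $\hht(\hf)$ is intersecting by Proposition \ref{prop-1.4}. Apply shifts $S_{ij}$ as long as they preserve $\tau\ge 3$. A family that cannot be shifted further either is shifted, or sits very close to a family with $\tau=2$, meaning that some pair $\{i,j\}$ covers all but a structured part. In the shifted case, intersecting shifted families with $\tau\ge 3$ have a rigid form: every $F$ meets $[1,3]$ in a controlled way, and $\tau\geq 3$ forces sets avoiding $\{1,2\}$, $\{1,3\}$ and $\{2,3\}$ to exist. For $n$ large relative to $k$ the bound then follows from the standard degree count $|\hf|\le |\hf(1)|+|\hf(\bar 1)|$. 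Here $\hf(\bar1)$ is intersecting with $\tau\ge 2$, so Hilton–Milner bounds the sets through $1$ via the requirement that each must meet every member of $\hf(\bar 1)$.

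The key step is this. Fix $x$ of maximum degree and let $\hb=\hf(\bar x)$, an intersecting family on $[n]\setminus\{x\}$ with $\tau(\hb)\ge 2$. Then
\[
|\hf|\le |\hb|+\#\{A\ni x\colon A\cap B\ne\emptyset\ \forall B\in\hb\}.
\]
The second term is maximized when $\hb$ is small and "spread". We do a case analysis on $\tau(\hb)$ and the structure of $\hht^{(2)}(\hb)$.

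\emph{Case 1: $\hb$ has a 2-cover $\{a,b\}$.} Both $\hb(\bar a)$ and $\hb(\bar b)$ must be nonempty. A simple counting argument, inclusion–exclusion over three sets $B_1,B_2,B_3\in\hb$ chosen as in $\hg(n,k)$, shows the count of $A\ni x$ meeting them all is at most the $\ha$-count unless $\hb$ has a special form. In that special form, $|\hb|$ is small enough to compensate.

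\emph{Case 2: $\tau(\hb)\ge 3$.} Then three pairwise-intersecting members with no common 2-cover force the number of admissible $A$ to be at most roughly $\binom{n-1}{k-1}-3\binom{n-k-1}{k-1}+O(k^2)\binom{n-2k}{k-3}$. This is much smaller, and $|\hb|\le m(n-1,k,3)$ is absorbed by induction on $n$.

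The main obstacle is the inequalities for small $n$, roughly $2k\le n\le 3k$ with $k\in\{4,5,6\}$. There the asymptotic slack vanishes and the binomial comparisons in Case 1 have to be verified directly. I would first try to reduce to finitely many numerical inequalities in $n$, monotone for $n\ge 3k$ by comparing ratios $\binom{n-k}{k-1}/\binom{n-k-1}{k-2}$. The remaining values of $n$ would be checked by explicit computation, with the $n=2k$ case treated as above.
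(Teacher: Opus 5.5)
\textbf{Base case.} At $n=2k$ you dropped the terms $\binom{n-k-2}{k-3}+3=(k-2)+3$. These exactly cancel $\binom{k}{k-1}+\binom{k-1}{k-1}=k+1$, so $|\hg(2k,k)|=\binom{2k-1}{k-1}$ and Erd\H{o}s--Ko--Rado alone settles $n=2k$. The ``small explicit deficit'' you want to extract from $\tau(\hf)\ge 3$ does not exist: $\hg(2k,k)$ has covering number $3$ and size exactly $\binom{2k-1}{k-1}$. That step would therefore be trying to prove a false inequality.

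\textbf{Main range.} For $n>2k$ there is a genuine gap. You name induction, shifting and a maximum-degree decomposition, but carry none of them through. Shifts $S_{ij}$ can destroy $\tau\ge 3$, and the ``unshiftable but not shifted'' residue you set aside is exactly where the difficulty sits.

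In $|\hf|=|\hf(\bar x)|+|\hf(x)|$, your Case 1 ($\hb=\hf(\bar x)$ has a $2$-cover) contains the extremal configuration and the situation where $\hht^{(3)}(\hf)$ is a star centred at $x$. In the paper the star case is a separate, long theorem imported from the authors' earlier work. The real problem there is the trade-off between $|\hb|$, which a priori can be of Hilton--Milner order, and $|\hf(x)|$. Inclusion--exclusion over three fixed members of $\hb$ only bounds $|\hf(x)|$, and ``unless $\hb$ has a special form, where $|\hb|$ is small enough'' is an assertion, not an argument.

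Case 2 also fails as written, for two reasons.
\begin{itemize}
\item Your bound $\binom{n-1}{k-1}-3\binom{n-k-1}{k-1}+\cdots$ is negative for large $n$, because the pairwise inclusion--exclusion terms are missing.
\item The induction does not close. You would need the number of $(k-1)$-sets covering $\hb$ to be at most $m(n,k,3)-m(n-1,k,3)$, which is $13$ for $k=4$. But a $4$-uniform intersecting $\hb$ with $\tau(\hb)=3$ can have $20$ three-element covers (take $\hb=\binom{[6]}{4}$). So $|\hb|\le m(n-1,k,3)$ cannot be used separately from the cover count.
\end{itemize}
Without concrete inequalities, ``explicit computation for $2k\le n\le 3k$'' has nothing to check.

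\textbf{What the paper does.} The missing ideas are structural.
\begin{itemize}
\item After reducing to $\tau(\hf)=3$ with $\hf$ saturated, $\hht^{(3)}(\hf)$ is intersecting. So it is either a star, equal to $\hk_3(4)$, or contains a copy of $\hs$ or $\hr$.
\item In the last case every $F$ meets $U=[5]$ or $[6]$ in at least two points, and the $2$-traces lie in $\hp(\hr)$ or $\hp(\hs)$.
\item These traces are split into disjoint pairs $P,P'$; for $\hr$ this uses the auxiliary graph and a leftover $P_0\notin\hi$.
\item Each sum $f_P+f_{P'}+f_{U\setminus P}+f_{U\setminus P'}$ is bounded via Frankl--Tokushige and Hilton's form of Kruskal--Katona. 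The remaining traces are bounded trivially or by the Sperner-type inequality.
\item Frankl's maximum-degree theorem handles $n=2k+1$ for $k\ge 5$, and $k=4$, $n=9$ is done by hand.
\end{itemize}
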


For $n=2k$, $|\hg(n,k)|=\frac{1}{2}\binom{2k}{k}=\binom{2k-1}{k-1}$. Thus $|\hf|\leq |\hg(n,k)|$ is true by the Erd\H{o}s-Ko-Rado Theorem. In the sequel we always assume $n>2k$.

\section{Preliminaries}

In this section, we recall some results in \cite{FW25} and a version of the Kruskal-Katona theorem.

\begin{prop}[\cite{FW25}]\label{prop-1}
Let $n> 2k$ and let $\hf\subset \binom{[n]}{k}$ be an intersecting family with covering number  at least 3. Then there exists an intersecting family $\hf'\subset \binom{[n]}{k}$ with $|\hf'|\geq |\hf|$ and $\tau(\hf')=3$.
\end{prop}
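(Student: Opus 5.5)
Starting from an intersecting $\hf\subset\binom{[n]}{k}$ with $\tau(\hf)\geq 4$, I would construct $\hf'$ directly: first enlarge, then prune until the covering number falls to exactly $3$.

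\emph{Step 1 (saturate).} Extend $\hf$ to a saturated intersecting family $\hf_0\supseteq\hf$. Adding sets cannot increase the covering number, so $\tau(\hf_0)\leq\tau(\hf)$, but it may decrease it. If $\tau(\hf_0)=3$ we are done. If $\tau(\hf_0)\geq 4$, go to Step 2. If $\tau(\hf_0)\leq 2$, saturation has gone too far; in that case I would add sets one at a time and stop just before the covering number drops below $3$. That is, take a maximal intersecting $\hg$ with $\hf\subseteq\hg$ and $\tau(\hg)\geq 3$. Since a single added set lowers $\tau$ by at most one, $\tau(\hg)=3$ whenever $\hg$ is not saturated. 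So we may assume we hold an intersecting $\hg\supseteq\hf$ that is either saturated or has $\tau(\hg)=3$.

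\emph{Step 2 (swap to lower $\tau$).} Suppose $\hg$ is saturated with $\tau(\hg)\geq 4$. Pick a minimal cover $T$ of size $\tau(\hg)$ and a point $x\in T$. Then $T\setminus\{x\}$ is not a cover, so some $G\in\hg$ meets $T$ only in $x$. I would modify $\hg$ with a shifting-type replacement that moves elements into a fixed $3$-set $S=\{a,b,c\}$. For instance, with the standard shift $S_{ij}$, which preserves intersection and size, repeatedly apply shifts $S_{yx}$ for $x\in S$. Each shift preserves $|\hg|$ and the intersecting property, and can decrease $\tau$ by at most one per shift, because $S_{ij}(\hg)$ is covered by $C\cup\{i\}$ whenever $C$ covers $\hg$. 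Fully shifting toward $[1,3]$ makes $[1,3]$ a cover, giving covering number at most $3$. Since $\tau$ changes by at most one at each step, some intermediate family has $\tau$ exactly $3$, and that family is $\hf'$.

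\emph{Main obstacle.} The difficult point is the claim that a shift lowers $\tau$ by at most one, together with the claim that the fully shifted family has $\tau\leq 3$. I expect the first to hold: $\tau(S_{ij}\hg)\geq\tau(\hg)-1$, because if $C$ covers $S_{ij}\hg$ then $C\cup\{j\}$ covers $\hg$. The second needs care. Full shifting gives a shifted intersecting family, for which every set meets $[1,2k-1]$, not necessarily $[1,3]$. I would instead shift only from outside into a chosen set. Alternatively, I would use the known fact that for a shifted intersecting family with $n>2k$, $\tau\leq 2$ except in degenerate cases, so $\tau$ eventually reaches $\leq 3$ by continuity. Step 1 also needs the observation that adding one set changes $\tau$ by at most one, which holds since if $C$ covers $\hg\cup\{F\}$ then $C$ covers $\hg$ — wait, this gives $\tau(\hg)\leq\tau(\hg\cup\{F\})$, and conversely $C\cup\{y\}$ with $y\in F$ covers $\hg\cup\{F\}$, so the drop is indeed at most one. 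Intersecting is preserved throughout, and $|\hf'|\geq|\hf|$ holds because we only add sets or apply size-preserving shifts.
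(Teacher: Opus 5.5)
The paper does not prove this proposition; it quotes it from earlier work. So your argument has to stand on its own, and it does not. The step everything rests on, that shifting eventually produces a family with $\tau\le 3$, is false, and no choice of shifts can repair it.

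Take $k\ge 4$, $n>2k$ and $\hf=\binom{[2k-1]}{k}$. This family is intersecting. It is saturated, since a $k$-set with at most $k-1$ points in $[2k-1]$ misses some $k$-subset of $[2k-1]$. Its covering number is $\tau(\hf)=k\ge 4$. So your Step~1 leaves it unchanged.

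Now use the convention that $S_{ij}$ replaces $j$ by $i$. For any $k$-uniform $\binom{Y}{k}$:
\begin{itemize}
\item if $j\notin Y$ or $i\in Y$, then $S_{ij}$ leaves $\binom{Y}{k}$ unchanged;
\item if $j\in Y$ and $i\notin Y$, then $S_{ij}$ maps it to $\binom{(Y\setminus\{j\})\cup\{i\}}{k}$.
\end{itemize}
Hence every sequence of shifts, in any direction and towards any target set, keeps the family isomorphic to $\binom{[2k-1]}{k}$, with covering number $k$. Your discrete intermediate-value argument never gets started.

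The ``known fact'' you fall back on is also false. The family $\{F\in\binom{[n]}{k}\colon |F\cap[2t-1]|\ge t\}$ is shifted and intersecting, and has $\tau=t$ for every $t\le k$. For $t=4$ it is invariant under all $S_{xy}$ with $x\in[3]$, $y\notin[3]$, yet it contains sets disjoint from $[3]$. So shifting into a chosen $3$-set does not make that set a cover either.

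What is missing is a genuinely different ingredient for families that are stuck with $\tau\ge 4$ under all size-preserving moves. You need a size comparison. For instance, show directly that such a family has at most $|\hg(n,k)|$ members; then you may simply take $\hf'=\hg(n,k)$, which has covering number $3$. In the example above this is easy, since $\binom{2k-1}{k}<|\hg(n,k)|$. In general, especially for $n$ close to $2k$, it is a real estimate, and that is where the substance of the proposition lies.

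Two smaller errors, which are repairable but worth correcting:
\begin{itemize}
\item In Step~1 the monotonicity is backwards. Every cover of $\hg\cup\{G\}$ is a cover of $\hg$, so enlarging a family can only increase $\tau$, by at most one per added set. Saturating therefore never lowers $\tau$, and the case $\tau(\hf_0)\le 2$ cannot occur. A maximal $\hg\supseteq\hf$ with $\tau(\hg)\ge 3$ is just a saturated family with $\tau(\hg)\ge 4$.
\item In Step~2, your first justification ($C\cup\{i\}$ covers $S_{ij}(\hg)$) bounds $\tau(S_{ij}(\hg))$ from above, not from below as you need. The correct reason is the one you give later: if $C$ covers $S_{ij}(\hg)$, then $C\cup\{j\}$ covers $\hg$.
\end{itemize}
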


In view of above, in proving Theorem \ref{thm-main} we may assume $\hht^{(3)}(\hf)\neq \emptyset$. Define two 3-graphs $\hs$ and $\hr$ as:
\[
\hs=\{\{1,2,3\},\{1,4,5\},\{2,4,6\}\},\ \hr=\{\{1,2,3\},\{1,4,5\},\{2,3,5\}\}.
\]
Let $\hk_3(4)$ denote the complete 3-graph on 4 vertices.

For $\hf\subset \binom{[n]}{k}$ and $i\in [n]$, define
\[
\hf(i) =\{F\setminus \{i\}\colon i\in F\in \hf\},\ \hf(\bar{i}) =\{F\in \hf\colon i\notin F\}.
\]

\begin{prop}\label{prop-3}
Let $\hf\subset \binom{[n]}{k}$ be a saturated intersecting family with covering number 3. If $\hht^{(3)}(\hf)$ is non-trivial, then
 $\hht^{(3)}(\hf)=\hk_3(4)$ or it  contains  an isomorphic copy of $\hs$ or $\hr$.
\end{prop}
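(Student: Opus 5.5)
The plan is to reduce the statement to a purely combinatorial fact about intersecting $3$-graphs. Since $\hf$ is saturated and $n\geq 2k$, Proposition~\ref{prop-1.4} shows that $\hht(\hf)$, and hence $\hht^{(3)}(\hf)$, is intersecting. It therefore suffices to prove the following. \emph{Let $\mathcal{T}_3$ be an intersecting $3$-graph with no vertex common to all its edges (this is what "non-trivial" means). Then either $\mathcal{T}_3=\hk_3(4)$, or $\mathcal{T}_3$ contains a copy of $\hs$ or $\hr$.} No other property of $\hf$ should be needed.

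\textbf{Step 1: three edges with no common vertex.} Suppose first that some three edges $A,B,C\in \mathcal{T}_3$ have $A\cap B\cap C=\emptyset$.

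If two of them share two vertices, say $A=\{1,2,3\}$ and $B=\{1,2,4\}$, then $C$ avoids $1$ and $2$, since $C$ contains neither common vertex. Being intersecting, $C$ meets $A$ only in $3$ and $B$ only in $4$. Hence $C=\{3,4,x\}$ with $x\notin\{1,2,3,4\}$. The triple $\{A,B,C\}$ is isomorphic to $\hr$: two edges share a pair, and the third meets each of them in a distinct private vertex.

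Otherwise all pairwise intersections are singletons $\{p\}=A\cap B$, $\{q\}=A\cap C$, $\{r\}=B\cap C$. If any two of $p,q,r$ coincided, that point would lie in all three edges, so $p,q,r$ are distinct. The three edges then span six vertices in exactly the configuration of $\hs$.

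\textbf{Step 2: every three edges share a vertex.} Now assume every three edges of $\mathcal{T}_3$ have a common vertex. Choose a minimal subfamily $E_1,\dots,E_m$ with no common vertex; by assumption $m\geq 4$. By minimality, for each $i$ there is a vertex $x_i$ lying in every $E_j$ with $j\neq i$ but not in $E_i$. These $x_i$ are pairwise distinct, since $x_i\notin E_i$ while $x_j\in E_i$ for $j\neq i$. So each $E_j$ contains the $m-1$ distinct vertices $x_i$, $i\neq j$. This forces $m-1\leq 3$, so $m=4$ and $E_j=\{x_1,\dots,x_4\}\setminus\{x_j\}$. Thus $\mathcal{T}_3\supseteq \hk_3(4)$ on $X=\{x_1,\dots,x_4\}$.

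Let $E\in\mathcal{T}_3$ be any edge outside this $\hk_3(4)$. If $|E\cap X|\leq 1$, then $E$ misses a triple of $X$, which contradicts the intersecting property. If $|E\cap X|=3$, then $E$ is one of the $E_j$. So $|E\cap X|=2$, say $E\cap X=\{a,b\}$ with $X=\{a,b,c,d\}$. Then $\{a,c,d\}$, $\{b,c,d\}$ and $E$ have no common vertex, and they form a copy of $\hr$ by Step~1. (This case actually contradicts the Step~2 assumption, but the conclusion holds either way.) Hence either $\mathcal{T}_3=\hk_3(4)$ or $\mathcal{T}_3$ contains $\hr$.

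\textbf{Expected difficulty.} The only step that is not routine is the Helly-type argument in Step~2: if every three edges share a vertex but the whole family does not, the family must be exactly $\hk_3(4)$. The minimal-subfamily argument with the distinct "witness" vertices $x_i$ handles this cleanly. The rest is a short case analysis on pairwise intersection sizes.
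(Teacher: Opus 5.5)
Your proof is correct, but it organizes the case analysis differently from the paper.

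\textbf{The paper's route.} It splits according to pairwise intersection sizes in $\hht^{(3)}(\hf)$.
\begin{itemize}
\item If two members $T,T'$ meet in a single vertex, say $1$, then non-triviality supplies some $T''$ avoiding $1$. The triple $(T,T',T'')$ is then automatically a copy of $\hs$ or $\hr$.
\item Otherwise the family is $2$-intersecting. Starting from $\{1,2,3\},\{1,2,4\}$, the members avoiding $1$ or $2$ are forced to be $\{2,3,4\}$ and $\{1,3,4\}$, which pins the family down as $\binom{[4]}{3}$.
\end{itemize}

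\textbf{Your route.} You split according to whether some three members have empty common intersection. That case is a short check giving $\hs$ or $\hr$. In the complementary case you use a Helly-type minimal-subfamily argument with the distinct witnesses $x_i$ to force a copy of $\hk_3(4)$. You then need an extra step showing that no further edge can be present.

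\textbf{Comparison.}
\begin{itemize}
\item The paper's version is shorter. In its first case, non-triviality hands over the third edge directly. Its second case needs neither a minimality argument nor a separate exclusion of extra edges.
\item Your version is more conceptual. Step~2 is an instance of a general fact: a minimal family of $k$-sets with empty intersection has at most $k+1$ members, with equality only for $\binom{X}{k}$, $|X|=k+1$. This makes clear why $\hk_3(4)$ is the unique exceptional configuration.
\item You state explicitly that Proposition~\ref{prop-1.4} makes $\hht^{(3)}(\hf)$ intersecting. The paper uses this only tacitly, when it equates ``no two members meet in exactly one vertex'' with ``$2$-intersecting''.
\end{itemize}
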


\begin{proof}
Let $\hht=\hht^{(3)}(\hf)$. If there exist $T,T'\in \hht$ with $|T\cap T'|=1$ then assume by symmetry $T=\{1,2,3\}$ and $T'=\{1,4,5\}$. Now choosing an arbitrary $T''\in \hht(\bar{1})$, $(T,T',T'')$ is isomorphic either to $\hs$ or to $\hr$.

Finally assume that $\hht$ is 2-intersecting, that is, $|T\cap T'|=2$ for all distinct $T,T'\in \hht$. By symmetry let $T=\{1,2,3\}$, $T'=\{1,2,4\}$. Then the 2-intersecting property implies $\hht(\bar{1})=\{\{2,3,4\}\}$, $\hht(\bar{2})=\{\{1,3,4\}\}$ and $\hht=\binom{[4]}{3}$ follows.
\end{proof}

Let us show the $\hht^{(3)}(\hf)=\hk_3(4)$ implies $|\hf|<|\hg(n,k)|$.

\begin{prop}\label{prop-4}
Let $\hf\subset \binom{[n]}{k}$ be a saturated intersecting family with covering number 3.
If $\hht^{(3)}(\hf)=\hk_3(4)$ and $n>2k\geq 6$,  then $|\hf|<|\hg(n,k)|$.
\end{prop}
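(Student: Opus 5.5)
Assume $\hht^{(3)}(\hf)=\binom{[4]}{3}$. The first step is a structural observation: every $F\in\hf$ satisfies $|F\cap[4]|\geq 2$. Indeed, if $F\cap[4]\subset\{x\}$, then the cover $[4]\setminus\{x\}\in\hht^{(3)}(\hf)$ would miss $F$. So I will split $\hf$ by the trace on $[4]$. For each pair $P\in\binom{[4]}{2}$ put
\[
\ha_P=\{F\setminus P\colon F\in\hf,\ F\cap[4]=P\}\subset\binom{[5,n]}{k-2}.
\]
Let $\hf_3$ be the sets meeting $[4]$ in exactly three elements and $\hf_4$ the sets containing $[4]$. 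Then
\[
|\hf|=\sum_P|\ha_P|+|\hf_3|+|\hf_4|,\qquad |\hf_3|\leq 4\binom{n-4}{k-3},\qquad |\hf_4|\leq \binom{n-4}{k-4}.
\]
The three complementary pairs $\{12|34\},\{13|24\},\{14|23\}$ give three cross-intersecting pairs $(\ha_P,\ha_{[4]\setminus P})$ in $\binom{[5,n]}{k-2}$, where $n-4\geq 2(k-2)+1$.

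Second step: for a complementary pair with both families nonempty, I will use the Hilton--Milner type bound for cross-intersecting families,
\[
|\ha|+|\hb|\leq\binom{m}{\ell}-\binom{m-\ell}{\ell}+1,\qquad m=n-4,\ \ell=k-2.
\]
This gives $O(n^{k-3})$ per pair instead of $\binom{n-4}{k-2}$. The key point is to show that at most one complementary pair can have an empty side with the other side large, or better, that the degenerate configurations contradict $\tau(\hf)=3$.

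Concretely, suppose all three "pair" families avoiding some element are empty, say $\ha_{34}=\ha_{24}=\ha_{23}=\emptyset$. Since $\tau(\hf)=3$, some member avoids $1$, and by the above it must contain $\{2,3,4\}$. Then $\{1,2\}$ meets every member of $\hf$, a contradiction. The general degenerate cases, where the empty sides are distributed differently, I would handle similarly: choose an element or pair hitting all the nonempty trace classes, and use saturation together with Proposition~\ref{prop-1.4} (which says $\hht(\hf)$ is intersecting) to force a 2-cover. In the remaining case, one complementary pair may have an empty side. I will bound that pair trivially by $\binom{n-4}{k-2}$, but the intersecting property then forces the other two pairs, and $\hf_3$, to be restricted so that the total stays small. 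This case analysis is the step I expect to be the main obstacle.

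Final step: I will compare the resulting upper bound, roughly
\[
3\left(\binom{n-4}{k-2}-\binom{n-k-2}{k-2}+1\right)+4\binom{n-4}{k-3}+\binom{n-4}{k-4},
\]
or its refined version from the degenerate case, with the explicit formula \eqref{ineq-hgnk} for $|\hg(n,k)|$. I would expand both in $n$, show that the leading $n^{k-3}$ coefficients favour $\hg(n,k)$, and then check the small values of $n>2k$ for $k=3,4,5,6$ directly by computing binomials. If the crude bound on $\hf_3$ is too weak near $n=2k+1$, I would first try sharpening it using that members of $\hf_3$ with trace $[4]\setminus\{x\}$ must meet the members of $\ha_P$ with $x\in P$.
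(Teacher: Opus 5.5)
There is a genuine gap, at the step you flag as the main obstacle. You allow one side $\ha_P$ of a complementary pair to be empty. You then propose to bound that pair trivially by $\binom{n-4}{k-2}$ and compensate through restrictions on the other pairs and on $\hf_3$, but you give no such argument, and the plan cannot work. $\binom{n-4}{k-2}$ has order $n^{k-2}$, while $|\hg(n,k)|$ has order $n^{k-3}$ (leading coefficient $(k^2-k+1)/(k-3)!$). So for large $n$ that single term already exceeds $|\hg(n,k)|$, whatever you prove about the rest.

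The missing idea is that this configuration never occurs. If $\ha_P=\emptyset$, then $[4]\setminus P$ is a cover of $\hf$: every $F\in\hf$ has $|F\cap[4]|\geq 2$, and $F\cap([4]\setminus P)=\emptyset$ would force $F\cap[4]=P$. Your own example already shows this, since $\{1,2\}$ is a cover as soon as $\ha_{34}=\emptyset$; the assumptions $\ha_{24}=\ha_{23}=\emptyset$ are superfluous. Hence $\tau(\hf)=3$ makes all six $\ha_P$ nonempty, and \eqref{ft92} applies to all three complementary pairs. You need no degenerate-case analysis and no saturation or Proposition~\ref{prop-1.4}, and you get exactly the bound you wrote down as ``roughly''. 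This is the paper's argument, which then finishes by citing Lemma 5.2 of \cite{FW25}.

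Two smaller points concern your final comparison. First, comparing leading coefficients of $n^{k-3}/(k-3)!$ ($3k-2$ for your bound, $k^2-k+1$ for $|\hg(n,k)|$) and then checking $k\leq 6$ by hand does not cover $k\geq 7$, which the hypothesis $n>2k\geq 6$ includes; you need an estimate valid uniformly in $k$. Second, for $k=3$ your bound equals $10=|\hg(n,3)|$, so strictness cannot come from the computation. Instead, observe that the hypothesis is vacuous for $k=3$: every member of an intersecting $3$-graph is itself a $3$-cover, so $\hf\subset\binom{[4]}{3}$, which has covering number $2$.
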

\begin{proof}
Without loss of generality assume $\hht=\binom{[4]}{3}$.  Then $|F\cap[4]|\geq 2$ for all $F\in \hf$. Define
\[
\hf_i=\{F\in \hf\colon |F\cap [4]|=i\}.
\]
Let $P$, $P'\in \binom{[4]}{2}$ with $P\cup P' =[4]$. If $\hf(P,[4])=\emptyset$, then $P'$ is a cover of $\hf$, contradicting $\tau(\hf)\geq 3$.
Thus  both $\hf(P,[4])$ and $\hf(P',[4])$ are non-empty. By \eqref{ft92},
\[
|\hf(P,[4])|+|\hf(P',[4])|\leq \binom{n-4}{k-2}-\binom{n-k-2}{k-2}+1.
\]
Since $\binom{[4]}{2}$ can be partitioned into 3 disjoint pairs,
\begin{align*}
|\hf|=|\hf_2|+|\hf_3|+|\hf_4|&\leq  3\left(\binom{n-4}{k-2}-\binom{n-k-2}{k-2}+1\right)+4\binom{n-4}{k-3}+\binom{n-4}{k-4}.
\end{align*}
By Lemma 5.2 in \cite{FW25}, the RHS is less than $|\hg(n,k)|$.
\end{proof}

 The case $\hht^{(3)}(\hf)$ is a star was settled in \cite{FW25}.

 \begin{thm}[\cite{FW25}]\label{thm-3}
Let $\hf\subset \binom{[n]}{k}$ be an intersecting family with covering number 3. If $\hht^{(3)}(\hf)$ is a star, then for $n\geq 2k\geq 8$,
\[
|\hf| \leq |\hg(n,k)|.
\]
\end{thm}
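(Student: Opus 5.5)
We may assume $\hf$ is saturated. Enlarging $\hf$ to a saturated intersecting family cannot decrease $\tau$, and by Proposition~\ref{prop-1} we may keep $\tau=3$. Moreover $\hht^{(3)}$ of the larger family is contained in $\hht^{(3)}(\hf)$, so it remains a star whenever it is non-empty. Let $1$ be the centre of the star $\hht^{(3)}(\hf)$ and put $\hh=\hf(\bar 1)\subset\binom{[2,n]}{k}$.

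The reduction to the structure of $\hh$ goes as follows. Since $\tau(\hf)=3$ and some $3$-cover $\{1,a,b\}$ exists, $\tau(\hh)=2$. The star condition says every $2$-cover of $\hh$ (a pair in $[2,n]$) extends to a $3$-cover of $\hf$ only through $1$. Saturation gives
\[
\hf(1)=\Bigl\{G\in\tbinom{[2,n]}{k-1}\colon G\cap H\neq\emptyset\ \text{for all }H\in\hh\Bigr\},
\]
so that
\begin{align*}
|\hf|=|\hh|+\binom{n-1}{k-1}-\Bigl|\bigcup_{H\in\hh}\Bigl\{G\in\tbinom{[2,n]}{k-1}\colon G\cap H=\emptyset\Bigr\}\Bigr|.
\end{align*}
The task thus becomes: over intersecting $\hh\subset\binom{[2,n]}{k}$ with $\tau(\hh)=2$ (plus the star restriction), show that $|\hh|$ minus the number of $(k-1)$-sets missing some member of $\hh$ is at most the value attained by $\hh=\hb$. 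This value is exactly the right-hand side of \eqref{ineq-hgnk}.

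For the core estimate I would first choose three members $H_1,H_2,H_3\in\hh$ realising the covering-number condition: $H_1\cap H_2\cap H_3=\emptyset$ if possible, otherwise a configuration like $\hb$ in which the common part is a single point not covering $\hh$. Inclusion–exclusion on the three "missing" families gives a lower bound for the union. This bound is minimised, as a function of the intersection pattern of $H_1,H_2,H_3$, precisely when $|H_2\cap H_3|=k-1$ and $|H_1\cap H_2|=|H_1\cap H_3|=1$, i.e.\ the $\hb$ pattern. This produces the terms $\binom{n-k}{k-1}+\binom{n-k-1}{k-1}-\binom{n-2k}{k-1}-\binom{n-k-2}{k-3}$.

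The remaining sets $H\in\hh\setminus\{H_1,H_2,H_3\}$ must then be paid for. Each such $H$ adds $1$ to $|\hh|$, but it also removes from $\hf(1)$ all $(k-1)$-sets disjoint from $H$ that meet $H_1,H_2,H_3$. I would show this "private loss" is at least of order $\binom{n-2k-1}{k-3}$, which is $\gg 1$ for $k\geq4$ and $n>2k$. To control overlaps between the losses of different extra sets I would shift $\hh$ within $[2,n]$, preserving intersection and the cover structure, and apply the Kruskal–Katona theorem to the family of complements. Assuming a large $\hh$, this yields a shadow-type lower bound for the union of the missing families that grows faster than $|\hh|$.

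The main obstacle is this last comparison, uniformly for all $n>2k$. When $n$ is close to $2k$ the binomial gaps shrink, and an $\hh$ with many members (for instance close to a Hilton–Milner-type family on $[2,n]$) might a priori compete. I would handle this by splitting on $|\hh|$. If $|\hh|$ is small, the private-loss argument suffices. If $|\hh|$ is large, I would use the Hilton–Milner bound on $\hh$ together with the fact that $\hf(1)$ and $\hh$ are cross-intersecting. Their sizes then satisfy a Kruskal–Katona/Hilton–Milner-type cross-intersecting inequality, which forces $|\hh|+|\hf(1)|$ below $|\hg(n,k)|$; this final numerical check is where $k\geq4$ is used.
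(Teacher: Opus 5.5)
There is a genuine gap, and it sits where you put the ``main obstacle''. Beyond $\tau(\hh)=2$, you never turn the hypothesis ($\tau(\hf)=3$ and $\hht^{(3)}(\hf)$ a star at $1$) into an inequality, and without it your estimates are false. Take $S\in\binom{[3,n]}{k}$ and $\hh=\{H\in\binom{[2,n]}{k}\colon 2\in H,\ H\cap S\neq\emptyset\}\cup\{S\}$. This $\hh$ is intersecting, has $\tau(\hh)=2$, and has Hilton--Milner size on $[2,n]$. The corresponding maximal $\hf(1)$ consists of all $(k-1)$-sets containing $2$ and meeting $S$: a $(k-1)$-set avoiding $2$ misses some $H\ni 2$, using $n\geq 2k$. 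Hence
\[
|\hh|+|\hf(1)|=\binom{n-1}{k-1}-\binom{n-k-1}{k-1}+1>|\hg(n,k)|.
\]
So the tools of your large-$|\hh|$ case (Hilton--Milner for $\hh$, cross-intersection of $\hh$ and $\hf(1)$, Kruskal--Katona) cannot force $|\hh|+|\hf(1)|<|\hg(n,k)|$. Likewise, the union of the ``missing'' $(k-1)$-sets does not outgrow $|\hh|$ in the sense you need. This family is excluded only because the $2$-cover $\{2,s\}$ of $\hh$ ($s\in S$) meets every member of $\hf(1)$, so $\tau(\hf)=2$. A correct proof has to use that every $2$-cover of $\hh$ is avoided by some member of $\hf(1)$, i.e.\ that no $3$-subset of $[2,n]$ covers $\hf$. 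Your ``star restriction'' is stated but never made operational.

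The small-$|\hh|$ case also breaks down in the range that matters. The private loss $\binom{n-2k-1}{k-3}$ you propose is $0$ for $n=2k+1$, and also for $n=2k+2$ when $k\geq 5$, so it is not $\gg 1$ there. Shifting does not preserve $\tau(\hh)=2$ or the cover structure: replacing some $x\in[k+2,2k]$ by $2$ turns $\hb$ into a star. So your Kruskal--Katona control of overlaps is unjustified. Finally, when every three members of $\hh$ share a point (possible with $\tau(\hh)=2$), three sets do not give the needed bound. Your ``otherwise'' case is misdescribed anyway, since $\hb$ itself has $H_1\cap H_2\cap H_3=\emptyset$.

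For comparison, the paper does not reprove this theorem. It quotes it from \cite{FW25}, where it is deduced from Propositions 5.3 and 5.4. It only observes that inequalities (4.15) and (4.16) of \cite{FW25}, the sole places where $k\geq 5$ was needed, can be checked directly for $k=4$.
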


It should be mentioned that the $k\geq 5$ case of Theorem \ref{thm-3} was deduced from Propositions 5.3 and 5.4 in \cite{FW25}. In the proofs of Propositions 5.3 and 5.4, inequalities (4.15) and (4.16) in \cite{FW25} were used. This is the only places where $k\geq 5$ is needed. However, it can be checked directly that (4.15) and (4.16) hold for $k=4$ as well. Thus Theorem \ref{thm-3} holds for $k=4$ as well.

Based on Propositions \ref{prop-1}, \ref{prop-3}, \ref{prop-4} and Theorem \ref{thm-3}, to prove Theorem \ref{thm-main} we are left with the case that $\hht^{(3)}(\hf)$ contains a copy of $\hs$ or $\hr$.

We need the following reformulation of the Kruskal--Katona Theorem, due to Hilton  \cite{Hilton}. To state it let us recall the definition of the lexicographic order on $\binom{[n]}{k}$. For two distinct sets $F,G\in \binom{[n]}{k}$ we say that $F$ {\it precedes} $G$ if
\[
\min\{i\colon i\in F\setminus G\}<\min\{i\colon i\in G\setminus F\}.
\]
Let $\hl(n,k,m)$ denote the family of the  first $m$ members of $\binom{[n]}{k}$ in the lexicographic order.

Two families $\ha\subset \binom{[n]}{a}$, $\hb\subset \binom{[n]}{b}$ are called {\it cross-intersecting} if $A\cap B\neq \emptyset$ for all $A\in \ha$, $B\in \hb$. We say that $\ha,\hb$ form {\it a saturated pair} if adding any new $a$-set to $\ha$ or any new $b$-set to $\hb$ would destroy the cross-intersecting property.

\begin{lem}[\cite{Kruskal, Katona66, Hilton}]\label{lem-hilton}
 Let $n,a,b$ be positive integers, $n\geq a+b$. Suppose that $\ha\subset \binom{[n]}{a}$ and $\hb\subset \binom{[n]}{b}$ are cross-intersecting. Then $\hl(n,a,|\ha|)$ and $\hl(n,b,|\hb|)$ are cross-intersecting as well.
\end{lem}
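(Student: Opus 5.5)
The plan is to reduce the statement to the Kruskal--Katona shadow theorem by passing to complements. Write $\hb^c=\{[n]\setminus B\colon B\in\hb\}\subset\binom{[n]}{n-b}$. Since $n\geq a+b$ we have $a\leq n-b$, so the $a$-shadow
\[
\partial_a(\hb^c)=\left\{A\in\tbinom{[n]}{a}\colon A\subset [n]\setminus B \mbox{ for some } B\in\hb\right\}
\]
makes sense. An $a$-set $A$ is disjoint from some $B\in\hb$ exactly when $A\in\partial_a(\hb^c)$. Hence $\ha,\hb$ are cross-intersecting if and only if $\ha\cap\partial_a(\hb^c)=\emptyset$. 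In particular, cross-intersection forces $|\ha|+|\partial_a(\hb^c)|\leq\binom{n}{a}$.

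The next step is to identify what happens for the lexicographic families. Put $m=|\hb|$.
\begin{itemize}
\item I will check that the complements of the members of $\hl(n,b,m)$ are exactly the last $m$ members of $\binom{[n]}{n-b}$ in lexicographic order. Indeed, complementation reverses the comparison: $F$ precedes $G$ if and only if $[n]\setminus G$ precedes $[n]\setminus F$, because $F\setminus G=([n]\setminus G)\setminus([n]\setminus F)$.
\item Apply the relabelling $i\mapsto n+1-i$. It turns a final lexicographic segment into an initial segment of the colex order.
\item The Kruskal--Katona theorem then says two things. First, among all $m$-element families of $(n-b)$-sets, the initial colex segment has the smallest $a$-shadow; call its size $s(m)$. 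Second, that shadow is itself an initial colex segment of $a$-sets.
\item Undoing the relabelling, $\partial_a$ of the complement family of $\hl(n,b,m)$ is precisely the last $s(m)$ members of $\binom{[n]}{a}$ in lexicographic order.
\end{itemize}

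Now combine the two steps. For the given $\ha,\hb$, Kruskal--Katona gives $|\partial_a(\hb^c)|\geq s(m)$. With the first paragraph this yields $|\ha|\leq\binom{n}{a}-s(m)$. Then $\hl(n,a,|\ha|)$ consists of the first $|\ha|$ lexicographic $a$-sets, and the shadow computed above is the last $s(m)$ of them. These two segments are therefore disjoint. By the equivalence of the first paragraph, applied now to the pair $\hl(n,a,|\ha|)$, $\hl(n,b,m)$, the two lexicographic families are cross-intersecting.

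The main obstacle is the order bookkeeping in the second step. One must check that complementation followed by the reversal $i\mapsto n+1-i$ really carries lexicographic initial segments onto colex initial segments. One must also use the precise form of Kruskal--Katona, namely that the shadow of an initial colex segment is again an initial colex segment, not just the numerical bound. I would verify the complementation step directly from the definition of ``precedes'' given before the lemma, as in the first bullet. For the second part I would cite the standard statement of Kruskal--Katona in colex form.
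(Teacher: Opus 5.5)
Your proof is correct. Complementation reverses the lex order, and the reflection $i\mapsto n+1-i$ turns the resulting final lex segment into an initial colex segment. The Kruskal--Katona theorem, iterated down to level $a$ using that shadows of initial colex segments are again initial colex segments, then gives both $|\ha|\leq\binom{n}{a}-s(|\hb|)$ and the fact that the forbidden shadow is a final lex segment of $\binom{[n]}{a}$.

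The paper gives no proof of its own; it only cites Kruskal, Katona and Hilton, and your complement-and-shadow reduction is the standard derivation behind that citation.
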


\begin{cor}\label{cor-hilton}
Let $\ha\subset \binom{[m]}{a}$, $\hb\subset \binom{[m]}{b}$ be cross-intersecting, $m>a+b$, $a>b$. If $|\hb|\geq \binom{m-1}{b-1}$ or $|\ha|\leq \binom{m-1}{a-1}$, then
\[
|\ha|+|\hb| \leq \binom{m-1}{a-1}+\binom{m-1}{b-1}.
\]
\end{cor}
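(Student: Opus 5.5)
By Lemma \ref{lem-hilton}, we may replace $\ha$ and $\hb$ by the initial segments $\hl(m,a,|\ha|)$ and $\hl(m,b,|\hb|)$. These are still cross-intersecting and have the same sizes, so from now on $\ha$ and $\hb$ are lexicographic initial families. Put $\ha_0=\{A\in\binom{[m]}{a}\colon 1\in A\}$ and $\hb_0=\{B\in\binom{[m]}{b}\colon 1\in B\}$. These are the first $\binom{m-1}{a-1}$, respectively $\binom{m-1}{b-1}$, members in lexicographic order. The target bound is $|\ha_0|+|\hb_0|$. Each hypothesis says that one of the two initial segments contains or is contained in the corresponding full star, and the proof splits into two cases on this basis.

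\emph{Case 1: $|\hb|\geq\binom{m-1}{b-1}$.} Then $\hb\supset\hb_0$, so $\ha$ is cross-intersecting with the full star $\hb_0$. If some $A\in\ha$ missed $1$, then since $m-1-a\geq b$ we could pick a $b$-set through $1$ disjoint from $A$, a contradiction. So every $A\in\ha$ contains $1$. Now write $\hb=\hb_0\cup\hb_1$, where the members of $\hb_1$ avoid $1$. Every $A\in\ha$ has the form $\{1\}\cup A'$, and $A'$ must meet every member of $\hb_1$. Hence $\ha(1)\subset\binom{[2,m]}{a-1}$ and $\hb_1\subset\binom{[2,m]}{b}$ are cross-intersecting on $m-1\geq (a-1)+b$ points. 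We must show $|\ha(1)|+|\hb_1|\leq\binom{m-1}{a-1}$. If $\hb_1=\emptyset$ this is trivial. Otherwise fix $B\in\hb_1$; then $\ha(1)$ misses all $(a-1)$-sets of $[2,m]\setminus B$, so $|\ha(1)|\leq\binom{m-1}{a-1}-\binom{m-1-b}{a-1}$. It then remains to bound $|\hb_1|$ by $\binom{m-1-b}{a-1}$. Because both families are lexicographic, I would either use a direct injection or induct on $m$, repeating the same argument one level down.

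\emph{Case 2: $|\ha|\leq\binom{m-1}{a-1}$.} Then $\ha\subset\ha_0$. If $\hb\subset\hb_0$ we are done. Otherwise $\hb$ contains sets avoiding $1$, and by the lex structure $\hb\supset\hb_0$. For the sets of $\hb$ avoiding $1$, the same cross-intersecting analysis applies to $\ha(1)$ against $\hb_1$, and we reduce to Case 1. So the real content is the inequality $|\ha(1)|+|\hb_1|\leq\binom{m-1}{a-1}$ for cross-intersecting $\ha(1)\subset\binom{[2,m]}{a-1}$ and $\hb_1\subset\binom{[2,m]}{b}$ with $\hb_1$ nonempty.

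\emph{Key inequality.} I would prove this as the standard statement: if $\hp\subset\binom{[N]}{p}$ and $\hat{\hb}\subset\binom{[N]}{q}$ are cross-intersecting, $N\geq p+q$, $p\geq q$, and $\hat{\hb}\neq\emptyset$, then $|\hp|+|\hat{\hb}|\leq\binom{N}{p}$. Here $N=m-1$, $p=a-1$, $q=b$. The hypotheses $a>b$ gives $p\geq q$, and $m>a+b$ gives $N\geq p+q$.

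The proof is a bipartite matching argument between $q$-sets and the $p$-sets disjoint from them. Each $B\in\hat{\hb}$ forbids all $p$-sets inside $[N]\setminus B$. By the normalized matching (LYM-type) property in the disjointness graph, for $p\geq q$ and $N\geq p+q$ the forbidden $p$-sets number at least $|\hat{\hb}|$. This is the step I expect to be the main obstacle. It amounts to the Kneser-graph type inequality $|\partial^{\text{disj}}(\hat{\hb})|\geq|\hat{\hb}|$, which follows from the regularity of the bipartite disjointness graph. Each $q$-set is disjoint from $\binom{N-q}{p}$ $p$-sets, and each $p$-set is disjoint from $\binom{N-p}{q}$ $q$-sets. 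Since $\binom{N-q}{p}\geq\binom{N-p}{q}$ when $p\geq q$, double counting gives the expansion. Adding $|\hb_0|=\binom{m-1}{b-1}$ then yields the claimed bound.
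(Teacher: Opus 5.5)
Your proof is correct and follows essentially the paper's route. You pass to lexicographic initial segments and get the full star at $1$ inside $\hb$; for the second hypothesis the paper uses saturatedness here, and your use of the lex structure is cleaner. You then deduce $1\in A$ for all $A\in\ha$ and finish with $|\ha(1)|+|\hb(\bar{1})|\leq\binom{m-1}{a-1}$, which the paper quotes from the Sperner-type Lemma~\ref{lem-3.5} and you prove by the equivalent double count of disjoint pairs using $\binom{N-q}{p}\geq\binom{N-p}{q}$.

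The tentative detour in Case~1 via a fixed $B\in\hb_1$ is unnecessary. The bound $|\hb_1|\leq\binom{m-1-b}{a-1}$ it aims for is false in general: take $m=6$, $a=3$, $b=2$, $\ha=\{\{1,2,3\}\}$ and $\hb$ all pairs meeting $\{1,2,3\}$, where $|\hb_1|=7>3$. So delete it and rely on your key inequality.
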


\begin{proof}
By Lemma \ref{lem-hilton}, we may assume that $\ha=\hl(m,a,|\ha|)$, $\hb=\hl(m,b,|\hb|)$ and $\ha,\hb$ form a saturated pair. If $|\hb|\geq \binom{m-1}{b-1}$, then
\[
\left\{B\in \binom{[m]}{b}\colon 1\in B\right\} \subset  \hb.
\]
If $|\ha|\leq \binom{m-1}{a-1}$, then $1\in A$ for all $A\in \ha$. By saturatedness, $\{B\in \binom{[m]}{b}\colon 1\in B\} \subset  \hb$.
By the cross-intersecting property, we infer that
\[
|\ha|+|\hb|=\binom{m-1}{b-1}+|\ha(1)|+|\hb(\bar{1})|.
\]
Since $\ha(1)$ and $\hb(\bar{1})$ are cross-intersecting and $a-1\geq b$, we infer from Lemma \ref{lem-3.5} below that
\[
|\ha(1)|+|\hb(\bar{1})| \leq \binom{m-1}{a-1}.
\]
Thus the corollary follows.
\end{proof}

\section{Some inequalities}

We need the Frankl-Tokushige inequality \cite{FT92, F2022}.

\begin{thm}[\cite{FT92, F2022}]\label{thm-ft92}
Let $\ha \subset\binom{X}{a}$ and $\hb \subset \binom{X}{b}$ be non-empty cross-intersecting families with $n=|X|\geq a+b$, $a\leq b$. Then
\begin{align}\label{ft92}
|\ha|+|\hb|\leq \binom{n}{b} - \binom{n-a}{b}+1.
\end{align}
Moreover, unless $n=a+b$ or $a=b=2$ the inequality is strict for $|\ha|> 1$ and $|\hb|> 1$.
\end{thm}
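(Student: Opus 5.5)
We would prove the Frankl--Tokushige inequality by reducing to initial segments via Lemma~\ref{lem-hilton}, and then optimizing over $|\ha|$. First, Lemma~\ref{lem-hilton} lets us replace $\ha,\hb$ by $\hl(n,a,|\ha|)$ and $\hl(n,b,|\hb|)$; these are still cross-intersecting and have the same sizes. We may also assume the pair is saturated. Then $\hb$ is determined by $\ha$: it is the family of all $b$-sets meeting every member of $\ha$. So the task becomes maximizing $f(\ha)=|\ha|+|\{B\in\binom{[n]}{b}\colon B\cap A\neq\emptyset\ \forall A\in\ha\}|$ over initial lexicographic segments $\ha$ with $|\ha|\geq1$, keeping $\hb$ non-empty.

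At the two endpoints the bound is attained. For $|\ha|=1$, say $\ha=\{[a]\}$, we get $f=1+\binom{n}{b}-\binom{n-a}{b}$, which is exactly the right-hand side of \eqref{ft92}. At the other extreme, $\hb$ is a single set and $\ha$ consists of all $a$-sets meeting it, which gives $1+\binom{n}{a}-\binom{n-b}{a}$. Since $a\leq b$ and $n\geq a+b$, this is at most the previous value, because $\binom{n}{b}-\binom{n-a}{b}-\binom{n}{a}+\binom{n-b}{a}\geq 0$. That can be checked by counting pairs, or by monotonicity in $b$ using Pascal's rule.

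The core step is to show that $f$ is ``convex'' along the lexicographic order, so the maximum sits at an endpoint. The natural way is induction on $n$, splitting on the element $1$. If $\ha\ni$ only sets containing $1$, then $\hb\supseteq\{B\colon 1\in B\}$. Moreover, $\ha(1)\subset\binom{[2,n]}{a-1}$ and $\hb(\bar1)\subset\binom{[2,n]}{b}$ are cross-intersecting on $n-1$ points, so induction applies to them, with the case of $\hb(\bar1)$ being empty handled separately. If $\ha$ contains a set avoiding $1$, then $\ha\supseteq\{A\colon 1\in A\}$. Hence every $B\in\hb$ contains $1$, and $\ha(\bar1),\hb(1)$ are cross-intersecting on $[2,n]$ with sizes $a$ and $b-1$; again we induct. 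Here the roles of $a$ and $b$ may swap, so the induction hypothesis has to be stated symmetrically, covering both $a\leq b$ and $a>b$, in the form given by the endpoint computation. In each branch we add back the fixed term $\binom{n-1}{b-1}$ or $\binom{n-1}{a-1}$ and compare with the target, using Pascal's identity $\binom{n}{b}-\binom{n-a}{b}=\binom{n-1}{b-1}+\binom{n-1}{b}-\binom{n-a}{b}$.

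The main obstacle is bookkeeping in this induction. The side condition $n\geq a+b$ must survive on $n-1$ points, which it does since the relevant sum of sizes also drops by one. We must also handle the degenerate branches where a restricted family is empty, which yield the endpoint configurations themselves.

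For the strictness statement, we track equality through the same induction. Equality forces one of the restricted families to be of size $1$ at every step, which propagates back to $|\ha|=1$ or $|\hb|=1$. The exception is when the binomial identities become tight, and this happens precisely when $n=a+b$ or $a=b=2$. For $a=b=2$, a triangle together with itself gives $3+3=6=\binom{n}{2}-\binom{n-2}{2}+1$ only when $n=4$; otherwise a direct check suffices. In the first case $n=a+b$, complementation makes $f$ constant.
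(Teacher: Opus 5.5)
Your induction does prove the inequality itself, and it is a legitimate self-contained route; the paper gives no proof and only imports the result. The steps are: Hilton's lemma, saturate $\mathcal{B}$, then split on whether every member of $\mathcal{A}$ contains $1$. Note that it bounds each branch directly, so the announced ``convexity along the lexicographic order'' is neither shown nor needed. In branch (i), the hypothesis for $(n-1,a-1,b)$ plus $\binom{n-1}{b-1}$ lands exactly on the target. In branch (ii) with $a=b$, the swapped hypothesis for $(n-1,a-1,a)$ does too. For $a<b$ you need $\binom{n-1}{a-1}\le\binom{n-1}{b}-\binom{n-1-a}{b}$, obtained by comparing $\binom{n-1}{a-1}=1+\sum_{j=1}^{a-1}\binom{n-1-j}{a-j}$ termwise with $\sum_{j=1}^{a}\binom{n-1-j}{b-1}$.

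However, your degenerate branch is misidentified. In branch (i), $\mathcal{B}(\bar 1)=\emptyset$ means $\mathcal{B}$ is the full star of $1$ and $\mathcal{A}$ is an arbitrary part of that star. This is not an endpoint configuration: for $a=2$, $b=3$, $n=6$ it gives $15$, while the other endpoint gives $13$. It needs the separate bound $\binom{n-1}{a-1}\le\binom{n-1}{b}-\binom{n-a}{b}+1$, proved termwise from $\binom{n-1}{a-1}=1+\sum_{j=2}^{a}\binom{n-j}{a-j+1}$ and $\binom{n-1}{b}-\binom{n-a}{b}=\sum_{j=2}^{a}\binom{n-j}{b-1}$. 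For $a=b=2$ this bound is an equality for every $n$: taking $\mathcal{A}=\mathcal{B}$ to be the star of $1$ gives $2(n-1)=\binom{n}{2}-\binom{n-2}{2}+1$. That, not the triangle (which is only the case $n=a+b=4$), is why $a=b=2$ is excluded. A ``direct check'' of strictness for $a=b=2$, $n\ge 5$ would be trying to prove something false.

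The genuine gap is the equality tracking. It is not true that a restricted family of size one propagates back to $|\mathcal{A}|=1$ or $|\mathcal{B}|=1$. It does so for the family of smaller uniformity ($\mathcal{A}(1)$, resp.\ $\mathcal{B}(1)$). But $|\mathcal{B}(\bar 1)|=1$ gives $|\mathcal{B}|=\binom{n-1}{b-1}+1$, and $|\mathcal{A}(\bar 1)|=1$ gives $|\mathcal{A}|=\binom{n-1}{a-1}+1$. These cases must be excluded by a strict endpoint comparison. Since $\binom{m}{b'}\binom{m-b'}{a'}=\binom{m}{a'}\binom{m-a'}{b'}$, one has $\binom{m}{b'}-\binom{m-a'}{b'}-\binom{m}{a'}+\binom{m-b'}{a'}=(1-t)\bigl(\binom{m}{b'}-\binom{m}{a'}\bigr)$ with $t=\binom{m-a'}{b'}/\binom{m}{b'}$. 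This is positive for $1\le a'<b'$, $m>a'+b'$, and the relevant sub-instances $(a-1,b)$ and, if $a=b$, $(a-1,a)$ satisfy these conditions.

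You also need strictness, for $n>a+b$, of the two auxiliary bounds above. The degenerate one is strict unless $a=1$ or $a=b=2$: the $j\ge 3$ terms are strict, and so is the $j=2$ term when $a<b$. The branch-(ii) one is strict for $a<b$ because $1<\binom{n-1-a}{b-1}$. The latter matters because the induction hypothesis gives no strictness for the sub-instance $(2,2)$, which arises from $(a,b)=(2,3)$ in branch (ii). With these three strict inequalities inserted, the induction does give the ``moreover'' part; the mechanism you describe does not.
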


For $\hf\subset \binom{[n]}{k}$ and $S\subset U\subset [n]$,  let
\[
\hf(S,U) =\{F\setminus U\colon F\in \hf,\ F\cap U=S\}.
\]
Define $f_S:=|\hf(S,U)|$.

\begin{prop}\label{prop-3.1}
Let $\hf\subset \binom{[n]}{k}$ be an intersecting family with $\tau(\hf)\geq 3$ and  let $U\subset [n]$ satisfy $|F\cap U|\geq 2$ for all $F\in \hf$.
Then for any $P\in \binom{U}{2}$,
\begin{align}\label{ineq-3.1}
f_{P} \leq \binom{n-|U|}{k-2}-\binom{n-k-|U|+2}{k-2}
\end{align}
For any $P,P'\in \binom{U}{2}$ with $P\cap P'=\emptyset$ and $n\geq 2k+|U|-4$,
\begin{align}\label{ineq-3.2}
  f_{P}+f_{P'} \leq \binom{n-|U|}{k-2}-\binom{n-k-|U|+2}{k-2}+1.
\end{align}
\end{prop}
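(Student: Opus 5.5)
Both inequalities come from building a cross-intersecting pair out of sets of $\hf$ whose trace on $U$ is small, and then applying the Hilton--Milner-type bound \eqref{ft92}. Put $X=[n]\setminus U$, so $|X|=n-|U|$. Every family $\hf(S,U)$ lies in $\binom{X}{k-|S|}$.

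\emph{Proof of \eqref{ineq-3.1}.} Fix $P\in\binom{U}{2}$.
\begin{itemize}
\item Since $\tau(\hf)\geq 3$, the pair $P$ is not a cover. So there is $G\in\hf$ with $G\cap P=\emptyset$.
\item By assumption $|G\cap U|\geq 2$, hence $|G\cap X|\leq k-2$.
\item Any $F\in\hf$ with $F\cap U=P$ must meet $G$, and $G$ avoids $P$. Therefore $F\setminus U$ meets $G\cap X$.
\item So $\hf(P,U)\subset\binom{X}{k-2}$ consists of sets meeting a fixed set $G\cap X$ of size at most $k-2$. This gives
\[
f_P\leq \binom{n-|U|}{k-2}-\binom{n-|U|-(k-2)}{k-2},
\]
which is exactly \eqref{ineq-3.1}.
\end{itemize}
Note that this count is largest when $|G\cap X|=k-2$, so the bound holds in all cases.

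\emph{Proof of \eqref{ineq-3.2}.} Let $P,P'\in\binom{U}{2}$ be disjoint.
\begin{itemize}
\item If $F,F'\in\hf$ satisfy $F\cap U=P$ and $F'\cap U=P'$, then $F\cap F'\cap U=P\cap P'=\emptyset$. Since $F\cap F'\neq\emptyset$, the sets $F\setminus U$ and $F'\setminus U$ intersect.
\item Hence $\hf(P,U)$ and $\hf(P',U)$ are cross-intersecting families in $\binom{X}{k-2}$.
\item If both are non-empty, apply Theorem~\ref{thm-ft92} with $a=b=k-2$. The condition $|X|\geq a+b$ reads $n-|U|\geq 2k-4$, which is the hypothesis $n\geq 2k+|U|-4$. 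This yields
\[
f_P+f_{P'}\leq \binom{n-|U|}{k-2}-\binom{n-|U|-k+2}{k-2}+1,
\]
which is \eqref{ineq-3.2}.
\item If one of the two families is empty, say $f_{P'}=0$, then $f_P+f_{P'}=f_P$. In this case \eqref{ineq-3.1} already gives the bound, with one to spare.
\end{itemize}

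\emph{Degenerate case.} When $k=2$, the sets $F\setminus U$ are empty. Then \eqref{ineq-3.1} forces $f_P\leq 0$: indeed, $F=P$ would make the covering set $G$ meet $P$, a contradiction. The stated bounds therefore still hold (the right-hand sides read $0$ and $1$). In this paper $k\geq 4$ anyway.

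No step is a real obstacle. The only point needing care is the case distinction in \eqref{ineq-3.2}: Theorem~\ref{thm-ft92} requires both families to be non-empty, so the empty case must be handled separately by \eqref{ineq-3.1}, as above.
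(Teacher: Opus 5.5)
Your proof is correct and follows the paper's argument essentially verbatim. For the first inequality you bound $f_P$ via a set $G\in\hf$ avoiding $P$, whose trace outside $U$ has at most $k-2$ elements. For the second, you use that $\hf(P,U)$ and $\hf(P',U)$ are cross-intersecting in $\binom{[n]\setminus U}{k-2}$, applying \eqref{ft92} when both are non-empty and \eqref{ineq-3.1} otherwise. Your remark that the number of $(k-2)$-sets meeting $G\cap X$ is monotone in $|G\cap X|$ makes explicit a step the paper leaves implicit.
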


\begin{proof}
By  $\tau(\hf)\geq 3$ there exists $F_0\in \hf$ such that $F_0\cap P=\emptyset$. Then $Q\cap (F_0\setminus U)\neq \emptyset$ for all $Q\in \hf(P,U)$. Since $|F_0\cap U|\geq 2$, we have
 \[
 f_{P}\leq \binom{n-|U|}{k-2}-\binom{n-|U|-(k-2)}{k-2}= \binom{n-|U|}{k-2}-\binom{n-k-|U|+2}{k-2}.
 \]

Note that $\hf(P,U)$ and  $\hf(P',U)$ are cross-intersecting and $|[n]\setminus U|=n-|U|\geq 2(k-2)$. If one of them is empty, then \eqref{ineq-3.2} follows from \eqref{ineq-3.1}.
 If both are non-empty, then by \eqref{ft92}
 \[
  f_{P}+f_{P'} \leq \binom{n-|U|}{k-2}-\binom{n-k-|U|+2}{k-2}+1.
 \]
\end{proof}

\begin{prop}
Let $\hf\subset \binom{[n]}{k}$ be an intersecting family with $\tau(\hf)\geq 3$. Let $U\subset [n]$ satisfy $|U|\in \{5,6\}$ and $|F\cap U|\geq 2$ for all $F\in \hf$. Then for any $P,P'\in \binom{U}{2}$ with $P\cap P'=\emptyset$ and $n\geq 2k+|U|-4$,
\begin{align}\label{ineq-key}
  f_{P}+f_{P'}+f_{U\setminus P}+f_{U\setminus P'} \leq \binom{n-|U|}{k-2}&-\binom{n-k-|U|+2}{k-2} \nonumber\\[3pt]
  & +\binom{n-|U|}{k-|U|+2} +\binom{n-|U|-1}{k-|U|+1}.
\end{align}
\end{prop}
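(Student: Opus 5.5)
Throughout, write $u=|U|\in\{5,6\}$, $m=n-u$, $Q=U\setminus P$, $Q'=U\setminus P'$, and $D=\binom{m}{k-2}-\binom{m-k+2}{k-2}$, which is the bound in \eqref{ineq-3.1}. Set
\[
\ha=\hf(P,U),\quad \ha'=\hf(P',U),\quad \hb=\hf(Q,U),\quad \hb'=\hf(Q',U).
\]
The families $\ha,\ha'$ consist of $(k-2)$-sets and $\hb,\hb'$ of $b$-sets with $b=k-u+2$, all inside the $m$-set $[n]\setminus U$.

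First I would record which pairs are forced to be cross-intersecting.
\begin{itemize}
\item Since $P\cap P'=\emptyset$, $P\cap Q=\emptyset$ and $P'\cap Q'=\emptyset$, the pairs $(\ha,\ha')$, $(\ha,\hb)$ and $(\ha',\hb')$ are cross-intersecting, because two members of $\hf$ with these traces on $U$ can only meet outside $U$.
\item The remaining pairs carry no constraint: $P\subset Q'$, $P'\subset Q$, and $Q\cap Q'=U\setminus(P\cup P')\neq\emptyset$ because $u\geq 5$.
\end{itemize}
So the constraints form a path $\hb-\ha-\ha'-\hb'$, and the target is $D+\binom{m}{b}+\binom{m-1}{b-1}$. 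The condition $n\geq 2k+u-4$ gives $m\geq 2k-4>(k-2)+b$, so Lemma \ref{lem-hilton} and Corollary \ref{cor-hilton} apply with $a=k-2>b$. The degenerate case $b=0$ (that is, $k=4,u=6$) is handled separately: if $\hb\neq\emptyset$ then $\hb=\{\emptyset\}$, which forces $\ha=\emptyset$, and the bound follows from \eqref{ineq-3.1} applied to $f_{P'}$.

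\textbf{Key lemma.} For a cross-intersecting pair $(\ha,\hb)$ as above, with $|\ha|\leq D$ by \eqref{ineq-3.1}, I claim $|\ha|+|\hb|\leq D+\binom{m-1}{b-1}$. There are two cases.
\begin{itemize}
\item If $|\ha|\leq\binom{m-1}{k-3}$, Corollary \ref{cor-hilton} gives $|\ha|+|\hb|\leq \binom{m-1}{k-3}+\binom{m-1}{b-1}$. This is at most $D+\binom{m-1}{b-1}$, since the $(k-2)$-sets through a fixed point $x\in F_0$ form a subfamily of those meeting a fixed $(k-2)$-set $F_0$.
\item Otherwise, either $|\hb|\geq\binom{m-1}{b-1}$, and Corollary \ref{cor-hilton} applies again; or $|\hb|<\binom{m-1}{b-1}$, and then $|\ha|+|\hb|<D+\binom{m-1}{b-1}$ directly.
\end{itemize}

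\textbf{Case $\ha'=\emptyset$ (or symmetrically $\ha=\emptyset$).} Then $\hb'$ is unconstrained, so $f_{Q'}\leq\binom{m}{b}$. Combined with the key lemma for $(\ha,\hb)$, this gives exactly \eqref{ineq-key}.

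\textbf{Case $\ha,\ha'$ both non-empty.} This is the main case. By \eqref{ft92} (equivalently \eqref{ineq-3.2}), $|\ha|+|\ha'|\leq D+1$. I would apply Lemma \ref{lem-hilton} to each of the three constrained pairs. Since every family becomes an initial lex segment of its own size, the three cross-intersecting relations survive simultaneously, so we may assume all four families are lexicographic. We may also assume the configuration is saturated along the path.

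Two sub-cases then arise.
\begin{itemize}
\item Both $\ha$ and $\ha'$ contain $1$ in every member. Then $\hb,\hb'$ can only gain from sets avoiding $1$ through $\ha(1)$ and $\ha'(1)$. The aim is to show $|\ha|+|\hb|+|\ha'|+|\hb'|\leq \binom{m-1}{k-3}+\binom{m-1}{b-1}+\binom{m}{b}$, roughly by arguing that $\ha'(1)$ and $\ha(1)$ together cost at least as much as they gain. Note $\binom{m-1}{k-3}\leq D$.
\item One of them, say $\ha$, is larger than $\binom{m-1}{k-3}$. Then the cross-intersection with the non-empty lex family $\ha'$ forces $\ha'$ to be very small, essentially a single set $\{[2,k-1]\}$-type configuration. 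In that case $|\ha'|+|\hb'|\leq 1+\binom{m}{b}-\binom{m-k+2}{b}$. Together with the key lemma, and absorbing the $+1$ because $\binom{m-k+2}{b}\geq 1$, this gives \eqref{ineq-key}.
\end{itemize}

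I expect the main obstacle to be this second, mixed case. One must quantify how large $\ha$ can be given $|\ha'|=t$, which for lex families is a Kruskal--Katona shadow computation, and trade $t$ against the loss $\binom{m}{b}-|\hb'|$. I would first try to prove monotonicity in $t$, so that the extreme cases $t=1$ and $t=\binom{m-1}{k-3}$ suffice, and then verify those two endpoints numerically for $k\in\{4,5,6\}$ and $u\in\{5,6\}$.
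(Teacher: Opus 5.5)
The gap is the case $\ha,\ha'\neq\emptyset$, which you yourself call the main case: it is not proved, and both sub-case plans contain false steps. Your set-up, the key lemma and the case $\ha'=\emptyset$ are correct.

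\textbf{First sub-case.} Your target $\binom{m-1}{k-3}+\binom{m-1}{b-1}+\binom{m}{b}$ is too strong when $u=6$, $k\geq 5$. The stars $\ha=\ha'=\{A\colon 1\in A\}$ and $\hb=\hb'=\{B\colon 1\in B\}$ satisfy all three cross-intersection constraints. They are also lex and saturated along the path. Their total size is $2\binom{m-1}{k-3}+2\binom{m-1}{b-1}$, which exceeds your target by $\binom{m-1}{k-3}-\binom{m-1}{k-4}>0$.

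\textbf{Second sub-case.} $|\ha|>\binom{m-1}{k-3}$ does not force $\ha'$ to be essentially one set. On the relabelled ground set $[m]$, the initial lex segments $\ha=\{A\colon 1\in A\}\cup\{[2,k-1]\}$ and $\ha'=\{A'\colon 1\in A',\ A'\cap[2,k-1]\neq\emptyset\}$ are cross-intersecting. Here $|\ha'|=\binom{m-1}{k-3}-\binom{m-k+1}{k-3}$, which grows with $m$ for $k\geq 5$. So your bound $|\ha'|+|\hb'|\leq 1+\binom{m}{b}-\binom{m-k+2}{b}$ only covers $|\ha'|=1$. Everything else is deferred to a Kruskal--Katona trade-off and a monotonicity claim that you do not carry out.

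\textbf{The missing idea.} None of that machinery is needed. You already have $|\ha|+|\ha'|\leq D+1$ by \eqref{ineq-3.2}, whether or not the families are empty. Split on the sizes of $\hb,\hb'$ rather than on the structure of $\ha,\ha'$; this is what the paper does.
\begin{itemize}
\item If, say, $|\hb'|<\binom{m-1}{b-1}$, then $|\hb|+|\hb'|\leq\binom{m}{b}+\binom{m-1}{b-1}-1$. Adding $D+1$ gives \eqref{ineq-key} exactly.
\item If $|\hb|,|\hb'|\geq\binom{m-1}{b-1}$, apply Corollary~\ref{cor-hilton} to $(\ha,\hb)$ and to $(\ha',\hb')$. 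The total is at most $2\binom{m-1}{k-3}+2\binom{m-1}{b-1}$. This is at most the right-hand side because $D\geq\binom{m-1}{k-3}+\binom{m-2}{k-3}$ and $\binom{m}{b}-\binom{m-1}{b-1}=\binom{m-1}{b}\geq\binom{m-2}{k-4}$ for $b\in\{k-3,k-4\}$.
\end{itemize}

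\textbf{The case $b=0$.} Your separate treatment is warranted. For $b=0$ the hypothesis $|\hb|\geq\binom{m-1}{-1}=0$ of Corollary~\ref{cor-hilton} holds automatically, but its conclusion can fail (take $\hb=\emptyset$ and $|\ha|=D$). The paper's uniform argument passes over this point. Your one-line treatment still needs finishing:
\begin{itemize}
\item If, say, $f_{U\setminus P}=1$, then $f_P=0$. Either $f_{U\setminus P'}=1$, which forces $f_{P'}=0$ and gives a total of $2\leq D+1$; or $f_{U\setminus P'}=0$, and the total is at most $1+D$.
\item If $f_{U\setminus P}=f_{U\setminus P'}=0$, use \eqref{ineq-3.2}.
\end{itemize}
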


\begin{proof}
Note that if $ f_{U\setminus P} \geq \binom{n-|U|-1}{k-|U|+1}$ then by Corollary \ref{cor-hilton},
 \[
 f_{P}+ f_{U\setminus P} \leq \binom{n-|U|-1}{k-3}+\binom{n-|U|-1}{k-|U|+1}.
 \]
 Consequently, if $f_{U\setminus P}\geq \binom{n-|U|-1}{k-|U|+1}$ and $f_{U\setminus P'}\geq \binom{n-|U|-1}{k-|U|+1}$, then
 \[
  f_{P}+f_{P'}+ f_{U\setminus P}+f_{U\setminus P'} \leq 2\binom{n-|U|-1}{k-3}+2\binom{n-|U|-1}{k-|U|+1}.
 \]
 We have to show that
 \begin{align*}
 \binom{n-|U|}{k-2}-\binom{n-k-|U|+2}{k-2}+&\binom{n-|U|}{k-|U|+2} +\binom{n-|U|-1}{k-|U|+1} \\[3pt]
 & \geq  2\binom{n-|U|-1}{k-3}+2\binom{n-|U|-1}{k-|U|+1}.
 \end{align*}
For $k\geq 4$, $\binom{n-|U|}{k-2}-\binom{n-k-|U|+2}{k-2}\geq  \binom{n-|U|-1}{k-3}+\binom{n-|U|-2}{k-3}$. Hence it suffices to prove
\[
\binom{n-|U|}{k-|U|+2}+\binom{n-|U|-2}{k-3}\geq \binom{n-|U|-1}{k-3}+\binom{n-|U|-1}{k-|U|+1}.
\]
Equivalently $\binom{n-|U|-1}{k-|U|+2}\geq \binom{n-|U|-2}{k-4}$, which is true for $|U|=5$ or 6.

 If one of $f_{U\setminus P}$, $f_{U\setminus P'}$ is smaller than $\binom{n-|U|-1}{k-|U|+1}$, then
 \begin{align}\label{ineq-4.2}
 f_{U\setminus P}+f_{U\setminus P'} \leq \binom{n-|U|}{k-|U|+2}+\binom{n-|U|-1}{k-|U|+1}-1.
 \end{align}
By Proposition  \ref{prop-3.1},
 \begin{align}\label{ineq-4.3}
 f_{P}+f_{P'} \leq \binom{n-|U|}{k-2}-\binom{n-k-|U|+2}{k-2}+1.
\end{align}
Adding \eqref{ineq-4.2} and \eqref{ineq-4.3},  the claim follows.
\end{proof}


For $k=4$ and $|U|=5$, we  need a slightly better upper bound.

\begin{prop}\label{prop-3.4}
Let $\hf\subset \binom{[n]}{4}$ be an intersecting family with $\tau(\hf)\geq 3$ and $n\geq 9$. Suppose that  $|F\cap [5]|\geq 2$ for all $F\in \hf$. Then for any $P,P'\in \binom{[5]}{2}$ with $P\cap P'=\emptyset$,
\begin{align}\label{ineq-key2}
f_P+f_{P'}+f_{[5]\setminus P}+f_{[5]\setminus P'} \leq 3(n-6),
\end{align}
where equality holds if and only if $f_P=0$, $f_{P'}=2n-13$ or $f_{P'}=0$,  $f_{P}=2n-13$.
\end{prop}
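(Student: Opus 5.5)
The plan is to reduce everything to elementary counting of graphs and vertex sets on the ground set $Y=[6,n]$, where $m:=|Y|=n-5\geq 4$. Write $P=\{1,2\}$, $P'=\{3,4\}$ (by symmetry), so $[5]\setminus P=\{3,4,5\}$ and $[5]\setminus P'=\{1,2,5\}$. Set $\mathcal{A}=\hf(P,[5])$ and $\mathcal{A}'=\hf(P',[5])$; these are graphs on $Y$, i.e.\ subsets of $\binom{Y}{2}$. Set $B=\hf(\{3,4,5\},[5])$ and $B'=\hf(\{1,2,5\},[5])$; these are families of singletons, which we view as vertex sets in $Y$. Put $a=|\mathcal{A}|$, $a'=|\mathcal{A}'|$, $b=|B|$, $b'=|B'|$, so the target is $a+a'+b+b'\leq 3m-3=3(n-6)$.

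\emph{Step 1: the constraints.} Since the corresponding traces on $[5]$ are pairwise disjoint:
\begin{itemize}
\item $\mathcal{A}$ and $\mathcal{A}'$ are cross-intersecting;
\item $\mathcal{A}$ and $B$ are cross-intersecting;
\item $\mathcal{A}'$ and $B'$ are cross-intersecting.
\end{itemize}
No condition links $B$ and $B'$, because $\{3,4,5\}\cap\{1,2,5\}\neq\emptyset$.

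Cross-intersection of $\mathcal{A}$ with singletons means every edge of $\mathcal{A}$ contains every vertex of $B$. Combining this with \eqref{ineq-3.1}, which gives $a\leq \binom{m}{2}-\binom{m-2}{2}=2m-3$, we get:
\begin{itemize}
\item $b=0$: $a+b\leq 2m-3$;
\item $b=1$: $\mathcal{A}$ lies in the star at that vertex, so $a\leq m-1$ and $a+b\leq m$;
\item $b=2$: $a\leq 1$, so $a+b\leq 3<m$;
\item $b\geq 3$: $a=0$ and $b\leq m$.
\end{itemize}
Hence always $a+b\leq 2m-3$. Moreover, if $b\geq 1$ then $a+b\leq m$, with equality only when $b=1,\ a=m-1$ (a full star) or $b=m,\ a=0$. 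The same bounds hold for $a'+b'$.

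\emph{Step 2: case $a=0$ or $a'=0$.} Say $a=0$. Then the total is $b+(a'+b')\leq m+(2m-3)=3m-3$. Equality forces $b=m$, $b'=0$ and $a'=2m-3$, which is the stated extremal configuration with $f_P=0$, $f_{P'}=2n-13$.

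\emph{Step 3: case $a,a'>0$.}
\begin{itemize}
\item If $b=b'=0$, then \eqref{ineq-3.2} gives a total of at most $2m-2<3m-3$.
\item Otherwise, say $b\geq 1$. Then the total is at most $m+(2m-3)$, and equality needs $a+b=m$ and $a'=2m-3$, $b'=0$. With $a>0$, $a+b=m$ forces $b=1$ and $\mathcal{A}$ to be the full star of $m-1$ edges at some vertex $x$. A graph cross-intersecting this full star must have every edge through $x$, since an edge avoiding $x$ misses some star edge when $m\geq 4$. So $a'\leq m-1<2m-3$, a contradiction, and the inequality is strict.
\end{itemize}

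The main obstacle is this strictness argument in Step 3 together with the bookkeeping of equality cases. Note that Theorem~\ref{thm-ft92} cannot be used for strictness directly here, because $a=b=2$ is exactly its exceptional case (both families are $2$-graphs). That is why I plan to argue directly via the star structure forced by $b\geq1$, rather than through Frankl--Tokushige.
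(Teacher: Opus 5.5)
Your proof is correct and uses the same ingredients as the paper's proof. These are \eqref{ineq-3.1} for $f_{P'}\le 2n-13$, \eqref{ineq-3.2} when both $f_{[5]\setminus P}$ and $f_{[5]\setminus P'}$ vanish, and the bound $f_P+f_{[5]\setminus P}\le n-5$ whenever $f_{[5]\setminus P}\ge 1$. The differences are minor: the paper gets that last bound from Corollary~\ref{cor-hilton}, whereas you derive it directly by noting that every member of $\hf(P,[5])$ must contain every vertex of $\hf([5]\setminus P,[5])$; the paper splits cases by which of $f_{[5]\setminus P},f_{[5]\setminus P'}$ vanish rather than which of $f_P,f_{P'}$ vanish; and it settles equality by using \eqref{ineq-3.2} to force $f_P\le 1$ instead of your full-star argument.
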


\begin{proof}
If $f_{[5]\setminus P},f_{[5]\setminus P'}\geq 1$, then by Corollary \ref{cor-hilton},
\begin{align*}
f_P+f_{[5]\setminus P}\leq 1+(n-6)=n-5,\ f_{P'}+f_{[5]\setminus P'}\leq 1+(n-6)=n-5.
\end{align*}
It follows that
  \[
f_P+f_{P'}+f_{[5]\setminus P}+f_{[5]\setminus P'} \leq 2(n-5).
  \]
Thus we may assume that one of $f_{[5]\setminus P}$, $f_{[5]\setminus P'}$ is zero. Without loss of generality assume $f_{[5]\setminus P'}=0$.

 By \eqref{ineq-3.2}, we infer that
  \begin{align*}
f_P+f_{P'} \leq \binom{n-5}{2}-\binom{n-7}{2}+1=2(n-6).
 \end{align*}
 Thus for  $f_{[5]\setminus P}=0$,
 \begin{align*}
f_P+f_{P'}+f_{[5]\setminus P}+f_{[5]\setminus P'}=f_P+f_{P'} \leq 2(n-6).
 \end{align*}

If $f_{[5]\setminus P}\geq 1$, then by Corollary \ref{cor-hilton}, $f_P+f_{[5]\setminus P}\leq n-5$. By \eqref{ineq-3.1}, $f_{P'}\leq (n-6)+(n-7)$. Thus
 \begin{align*}
 f_{P}+f_{P'}+f_{[5]\setminus P}+f_{[5]\setminus P'}&= (f_P+f_{[5]\setminus P})+f_{P'}\\[3pt]
 &\leq (n-5)+(n-6)+(n-7)=3(n-6).
 \end{align*}
If equality holds, then $f_{P'}= (n-6)+(n-7)$. It follows that $f_P\leq 1$. If $f_P=1$ then $f_{[5]\setminus P}\leq 2$. Thus $f_P=0$ and $f_{[5]\setminus P}=n-5$.
\end{proof}

The next statement can be deduced using an old argument of Sperner \cite{Sperner}.

\begin{lem}[\cite{F2022,Sperner}]\label{lem-3.5}
Let $\hf\subset \binom{[n]}{k}$ be an intersecting family  and let $U\subset [n]$. If $A,B\subset U$, $A\cap B=\emptyset$ then for $n\geq 2k-|A|-|B|+|U|$,
\begin{align}\label{ineq-sperner}
\alpha(A)+\alpha(B)\leq 1,
\end{align}
where $\alpha(A)=f_A/\binom{n-|U|}{k-|A|}$ and  $\alpha(B)=f_B/\binom{n-|U|}{k-|B|}$.
\end{lem}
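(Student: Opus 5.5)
The plan is a Sperner-type double counting over chains (equivalently, random permutations) of $[n]\setminus U$, combined with the intersecting property. Write $m=n-|U|$, $a=|A|$, $b=|B|$. The families $\hf(A,U)\subset\binom{[n]\setminus U}{k-a}$ and $\hf(B,U)\subset\binom{[n]\setminus U}{k-b}$ are cross-intersecting. Indeed, if $F,F'\in\hf$ with $F\cap U=A$ and $F'\cap U=B$, then $F\cap F'\cap U=A\cap B=\emptyset$, so the intersecting property forces $(F\setminus U)\cap(F'\setminus U)\neq\emptyset$. Hence \eqref{ineq-sperner} reduces to the following statement about cross-intersecting families $\mathcal{X}\subset\binom{[m]}{p}$ and $\mathcal{Y}\subset\binom{[m]}{q}$ with $p=k-a$, $q=k-b$ and $m\geq p+q$ (this is exactly the hypothesis $n\geq 2k-|A|-|B|+|U|$):
\[
\frac{|\mathcal{X}|}{\binom{m}{p}}+\frac{|\mathcal{Y}|}{\binom{m}{q}}\leq 1.
\]

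To prove this I would use Katona's cyclic-permutation method, which is Sperner's argument adapted to cross-intersection. Fix a cyclic ordering $\sigma$ of $[m]$. Let $x(\sigma)$ be the number of members of $\mathcal{X}$ that are arcs (intervals of consecutive elements) of $\sigma$, and define $y(\sigma)$ the same way for $\mathcal{Y}$. Averaging over all $\sigma$, a fixed $p$-set is an arc with the same probability as any other $p$-set, and there are $m$ arcs of length $p$. Therefore
\[
\mathbb{E}\,x(\sigma)=m\,\frac{|\mathcal{X}|}{\binom{m}{p}}, \qquad \mathbb{E}\,y(\sigma)=m\,\frac{|\mathcal{Y}|}{\binom{m}{q}}.
\]
It thus suffices to show $x(\sigma)+y(\sigma)\leq m$ for every $\sigma$.

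That per-cycle inequality is the key step, and the main obstacle. Suppose $x(\sigma)\geq 1$ and $y(\sigma)\geq 1$, and fix a $p$-arc $X\in\mathcal{X}$. Since $m\geq p+q$, every $q$-arc meeting $X$ must start or end strictly inside $X$, or cross one of its two boundaries. I would pair the $q$-arcs meeting $X$ with the $p$-arcs meeting a given $q$-arc via the boundary positions. The standard approach is as follows: the arcs in $\mathcal{Y}$ are determined by their starting points. For each starting point $i$, the $p$-arc ending at $i-1$ and the $q$-arc starting at $i$ are disjoint. Hence for each of the $m$ "cut points" $i$, at most one of these two arcs can lie in $\mathcal{X}$ or $\mathcal{Y}$ respectively, unless $m=p+q$ and other coincidences occur. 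This gives $x(\sigma)+y(\sigma)\leq m$ directly. The pairing is a bijection: each $p$-arc ends at a unique point and each $q$-arc starts at a unique point. So it already works whenever $m\geq p+q$, because $m\geq p+q$ is exactly what guarantees that the two arcs of a pair are disjoint. If $x(\sigma)=0$ or $y(\sigma)=0$, the bound is trivial since there are at most $m$ arcs of each length.

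Taking expectations then gives $m\alpha(A)+m\alpha(B)\leq m$, which is \eqref{ineq-sperner}. The only point needing care is verifying disjointness of the paired arcs (an arc of length $p$ ending at $i-1$ and one of length $q$ starting at $i$ together occupy $p+q\leq m$ consecutive positions). I would also note that the degenerate cases $p=0$ or $q=0$, where $A$ or $B$ has size $k$, are handled trivially: if, say, $p=0$, then $\mathcal{X}$ nonempty forces $\mathcal{Y}$ to be empty.
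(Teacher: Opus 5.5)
Your proof is correct, but it follows a different route from the one the paper indicates.

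The paper gives no written proof. It only says the lemma follows from ``an old argument of Sperner'', i.e.\ the shadow (normalized matching) inequality. In that route, set $m=n-|U|$, $p=k-|A|$, $q=k-|B|$, and
\[
\mathcal{G}=\{([n]\setminus U)\setminus Y\colon Y\in\hf(B,U)\}\subset\binom{[n]\setminus U}{m-q}.
\]
Cross-intersection says exactly that $\hf(A,U)$ avoids the $p$-shadow $\partial_p\mathcal{G}$. Double counting the pairs $(S,G)$ with $S\subset G\in\mathcal{G}$ and $|S|=p$ gives
\[
\frac{|\partial_p\mathcal{G}|}{\binom{m}{p}}\geq \frac{|\mathcal{G}|}{\binom{m}{m-q}}=\frac{f_B}{\binom{m}{q}}
\]
whenever $p\leq m-q$, which is precisely the hypothesis $n\geq 2k-|A|-|B|+|U|$. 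Hence $\alpha(A)\leq 1-\alpha(B)$.

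You instead use Katona's cyclic method. Your per-cycle pairing (the $p$-arc ending just before a cut point against the $q$-arc starting at it) is valid for every cyclic order. There is no exceptional case at $m=p+q$: there the two arcs are complementary but still disjoint. So the hedge ``unless $m=p+q$ and other coincidences occur'' can be dropped, and so can the preliminary discussion of a fixed arc $X$ and the assumption $x(\sigma),y(\sigma)\geq 1$.

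As for what each approach buys: yours is self-contained and needs no shadow inequality, only a one-line pairing and a symmetry average. However, it needs $p,q\geq 1$ so that there are exactly $m$ distinct arcs of each length, which is why $p=0$ or $q=0$ must be handled separately, as you do. Sperner's argument treats all cases uniformly, including $|A|=k$ or $|B|=k$. It also places the lemma in the same shadow/Kruskal--Katona framework the paper uses in Hilton's lemma and its corollary.
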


\begin{prop}
Let $\hf\subset \binom{[n]}{k}$ be an intersecting family with $\tau(\hf)\geq 3$. Suppose that  $|F\cap [5]|\geq 2$ for all $F\in \hf$ and $n> 2k$. Then for any $P,P'\in \binom{[5]}{2}$ with $P\cap P'=\emptyset$,
\begin{align}\label{ineq-3.3}
f_P+f_{P'}+f_{[5]\setminus P}+f_{[5]\setminus P'} \leq \binom{n-5}{k-2}+\binom{n-5}{k-3}.
\end{align}
\end{prop}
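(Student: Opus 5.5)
The plan is to pair each $2$-set with its complementary $3$-set in $[5]$ and apply Lemma~\ref{lem-3.5} to each pair. The resulting linear inequalities reduce the whole sum to a bound on $f_P+f_{P'}$ alone, and \eqref{ineq-3.2} supplies that bound.

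\textbf{Step 1 (Sperner-type bound for each pair).} Take $U=[5]$, $A=P$, $B=[5]\setminus P$. These are disjoint with $|A|+|B|=5=|U|$, so the condition of Lemma~\ref{lem-3.5} reads $n\geq 2k$, which holds. Hence
\[
\frac{f_P}{\binom{n-5}{k-2}}+\frac{f_{[5]\setminus P}}{\binom{n-5}{k-3}}\leq 1,
\]
and the same holds with $P'$ in place of $P$. Set $\gamma=\binom{n-5}{k-3}/\binom{n-5}{k-2}$. This gives
\[
f_P+f_{[5]\setminus P}\leq (1-\gamma)f_P+\binom{n-5}{k-3},
\]
and similarly for $P'$.

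\textbf{Step 2 (sign of $1-\gamma$).} Since $n\geq 2k+1$ we have $n-5\geq 2k-4\geq 2k-5$, so $\binom{n-5}{k-2}\geq \binom{n-5}{k-3}$. Therefore $0\leq 1-\gamma$, and the right-hand sides of Step~1 are nondecreasing in $f_P$ and $f_{P'}$.

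\textbf{Step 3 (sum the two bounds).} Adding the two inequalities from Step~1 gives
\[
f_P+f_{P'}+f_{[5]\setminus P}+f_{[5]\setminus P'}\leq (1-\gamma)(f_P+f_{P'})+2\binom{n-5}{k-3}.
\]
By \eqref{ineq-3.2} with $|U|=5$, whose hypothesis $n\geq 2k+1$ holds, we have
\[
f_P+f_{P'}\leq \binom{n-5}{k-2}-\binom{n-k-3}{k-2}+1.
\]
We then want to check
\[
(1-\gamma)\left(\binom{n-5}{k-2}-\binom{n-k-3}{k-2}+1\right)+2\binom{n-5}{k-3}\leq \binom{n-5}{k-2}+\binom{n-5}{k-3}.
\]
Because $(1-\gamma)\binom{n-5}{k-2}=\binom{n-5}{k-2}-\binom{n-5}{k-3}$, this is equivalent to
\[
(1-\gamma)\left(1-\binom{n-k-3}{k-2}\right)\leq 0.
\]
That holds since $1-\gamma\geq 0$ and $n-k-3\geq k-2$ (as $n\geq 2k+1$), so $\binom{n-k-3}{k-2}\geq 1$.

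\textbf{Main obstacle.} The delicate point is Step~3. Bounding each pair separately by $\binom{n-5}{k-2}$ is too weak: it only gives $2\binom{n-5}{k-2}$. One must keep the slack $(1-\gamma)f_P$ from Lemma~\ref{lem-3.5} and control $f_P+f_{P'}$ jointly through the cross-intersecting bound \eqref{ineq-3.2}. I would double-check the range conditions there, namely $n\geq 2k+|U|-4=2k+1$ and the case where one of $\hf(P,[5])$, $\hf(P',[5])$ is empty, which \eqref{ineq-3.2} already covers via \eqref{ineq-3.1}.
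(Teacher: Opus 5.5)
Your proof is correct and is essentially the paper's argument: apply Lemma~\ref{lem-3.5} to the complementary pairs $(P,[5]\setminus P)$ and $(P',[5]\setminus P')$, reduce to a bound that is linear in $f_P+f_{P'}$ with nonnegative coefficient, and finish using $\binom{n-5}{k-2}\ge\binom{n-5}{k-3}$. The only difference is the bound on $f_P+f_{P'}$. The paper applies Lemma~\ref{lem-3.5} once more, to the disjoint pair $(P,P')$ (valid since $n\ge 2k+1$), and gets $f_P+f_{P'}\le\binom{n-5}{k-2}$, which already suffices because your final check then holds with equality. So invoking the Frankl--Tokushige bound \eqref{ineq-3.2}, and with it the hypotheses $\tau(\hf)\ge 3$ and $|F\cap[5]|\ge 2$, is unnecessary, and your ``main obstacle'' remark overstates what is needed.
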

\begin{proof}
Let $\alpha(P)=f_P/\binom{n-5}{k-2}$ and $\alpha([5]\setminus P)=f_{[5]\setminus P}/\binom{n-5}{k-3}$. Then by the intersecting property and \eqref{ineq-sperner},
\[
\alpha(P)+\alpha(P')\leq 1,\ \alpha(P)+\alpha([5]\setminus P)\leq 1,\ \alpha(P')+\alpha([5]\setminus P')\leq 1.
\]
It follows that
\begin{align*}
&\quad \ f_P+f_{P'}+f_{[5]\setminus P}+f_{[5]\setminus P'}\\[3pt]
& = (\alpha(P)+\alpha(P')) \binom{n-5}{k-2}+(\alpha([5]\setminus P)+\alpha([5]\setminus P')) \binom{n-5}{k-3}\\[3pt]
& \leq (\alpha(P)+\alpha(P')) \binom{n-5}{k-2}+(2-\alpha(P)-\alpha(P')) \binom{n-5}{k-3}\\[3pt]
&=2 \binom{n-5}{k-3}+(\alpha(P)+\alpha(P')) \left(\binom{n-5}{k-2}-\binom{n-5}{k-3}\right)\\[5pt]
&\leq \binom{n-5}{k-2}+\binom{n-5}{k-3}.
\end{align*}
\end{proof}

\section{The case $n=2k+1$ and $k\geq 5$}

By carefully rechecking the proofs in \cite{FW25} we discovered that the proof of $|\hf|\leq |\hg(n,k)|$ is missing in the case that $n=2k+1$ and $\hht^{(3)}(\hf)$ contains a copy of $\hs$.

The next proposition fills in this gap.

\begin{prop}\label{prop-2k+1}
Let $\hf\subset \binom{[n]}{k}$ be an intersecting family with $\tau(\hf)=3$  If $\hht^{(3)}(\hf)$ is non-trivial, then $|\hf|< |\hg(n,k)|$ for $n=2k+1$ and $k\geq 5$.
\end{prop}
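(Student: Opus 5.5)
We may assume $\hf$ is saturated. Then $\hht(\hf)$ is intersecting by Proposition~\ref{prop-1.4}. Since $\hht^{(3)}(\hf)$ is non-trivial, Propositions~\ref{prop-3} and \ref{prop-4} reduce us to the case where $\hht^{(3)}(\hf)$ contains a copy of $\hs$ or of $\hr$. The gap pointed out above concerns $\hs$, but the plan below treats both configurations in the same way.

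The basic observation is this. If $T_1,T_2,T_3\in\hht^{(3)}(\hf)$ and $U=T_1\cup T_2\cup T_3$, then every $F\in\hf$ meets each $T_i$. For $\hr$ we have $U=[5]$ and $\{1,2,3\},\{1,4,5\}$ partition $U\setminus\{1\}$. Hence a set meeting $U$ in the single vertex $1$ misses $\{2,3,5\}$, and a single vertex $\ne 1$ misses one of the first two triples. So $|F\cap U|\ge 2$ for all $F\in\hf$. For $\hs$ we have $U=[6]$, and one checks that a single vertex cannot meet all three triples, so again $|F\cap U|\ge 2$.

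Write $|\hf|=\sum_{S\subset U,\,|S|\ge2} f_S$. With $n=2k+1$ we get $n-|U|=2k-4$ or $2k-5$, so the condition $n\ge 2k+|U|-4$ of Propositions~\ref{prop-3.1} and \ref{ineq-key} holds for $|U|=5$. For $|U|=6$ it fails by one, so there the bounds must come from Lemma~\ref{lem-3.5} and the Frankl--Tokushige inequality directly.

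The counting runs as follows. Partition $\binom{U}{2}$ into groups of disjoint pairs $\{P,P'\}$. For $|U|=5$, choose the ten pairs so that they form five disjoint pairs $(P,P')$ and each $3$-set $U\setminus P$ is used once. Then apply \eqref{ineq-key} (or \eqref{ineq-3.3}) to each quadruple $P,P',U\setminus P,U\setminus P'$. For $|U|=5$ this pairing is a perfect matching in the Kneser graph $K(5,2)$ (the Petersen graph), which exists. The remaining sets $S$ with $|S|\ge4$ are bounded trivially by $\binom{n-|U|}{k-|S|}$. Here the three sets $T_i$ themselves are not edges, since $\tau(\hf)=3$ forces $F\ne T_i\cup\ldots$ only mildly. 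However, the structure of $\hht^{(3)}$ forces extra empty classes, namely $f_S=0$ whenever $S$ misses some $T_i$. For example, in $\hr$ the pair $S=\{1,2\}$ misses $\{1,4,5\}$? No, it contains $1$. In any case, I would list exactly which $S\in\binom{U}{2}$ meet all three $T_i$, since only those can have $f_S>0$; this already kills several of the large $\binom{n-|U|}{k-2}$ terms. For $\hr$, the pairs meeting all of $\{1,2,3\},\{1,4,5\},\{2,3,5\}$ are $12,13,15,24,25,34,35$, leaving seven pairs. For $\hs$ there are correspondingly few.

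Summing then gives an explicit polynomial upper bound in $k$ at $n=2k+1$. This has to be compared with $|\hg(2k+1,k)|=\binom{2k}{k-1}-\binom{k+1}{k-1}-\binom{k}{k-1}+\binom{1}{k-1}+\binom{k-1}{k-3}+3$, that is, $\binom{2k}{k-1}-\binom{k+1}{2}-k+\binom{k-1}{2}+3$.

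The main obstacle is that $n=2k+1$ is the tightest case: the trivial bounds $\binom{n-|U|}{k-|S|}$ for $|S|=3$ are comparable to the main terms, so a crude sum may exceed $\binom{2k}{k-1}$. I would first try the Sperner-type weighting of Lemma~\ref{lem-3.5} globally. Assign each class its density $\alpha(S)$, and use $\alpha(A)+\alpha(B)\le1$ for every disjoint pair $A,B\subset U$ with $|A|+|B|\le |U|$ (the condition $n\ge 2k-|A|-|B|+|U|$ holds when $|A|+|B|\ge |U|-1$). The resulting linear program over the $\alpha(S)$ should be solved, and its optimum checked to lie below $|\hg(2k+1,k)|$ for $k\ge5$. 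I would then verify the final binomial inequality separately for $k=5,6$ numerically and for $k\ge7$ by ratio estimates.
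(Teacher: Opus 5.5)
Your proposal stops exactly where the proof has to happen. After the setup $|\hf|=\sum_S f_S$, the decisive estimate is only announced (``the resulting linear program \dots\ should be solved, and its optimum checked''). At $n=2k+1$ this is far from routine, because $|\hg(2k+1,k)|=\binom{2k}{k-1}-3k+4$ is only linearly below the Erd\H{o}s--Ko--Rado bound.

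The one concrete count you describe fails. Take $k=5$, $n=11$, $U=[5]$. Inequality \eqref{ineq-3.3} bounds each of your five quadruples only by $\binom{6}{3}+\binom{6}{2}=35$, and \eqref{ineq-key} is weaker ($39$). Adding the classes with $|S|\ge 4$ gives $175+30+1=206>199=|\hg(11,5)|$.

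For $|U|=6$ the Frankl--Tokushige inequality is not available either. The pair classes lie in $\binom{[7,n]}{k-2}$ with $|[7,n]|=2k-5<2(k-2)$. So the cross-intersection of two classes of disjoint pairs gives no restriction at all, since any two $(k-2)$-subsets of a $(2k-5)$-set meet. Only the constraints of Lemma~\ref{lem-3.5} with $|A|+|B|\ge 5$ survive.

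Your global LP can in fact be pushed through. With exactly those constraints and the vanishing of classes that miss a triple, its optimum at $k=5$ is $198.5$ in the $\hr$ case and $196$ in the $\hs$ case. The comparison for general $k$ would still have to be done. But that computation, with margins of $1/2$ and $3$, is the entire content of the proof, and it is missing.

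Also, ``we may assume $\hf$ is saturated'' is not free here. Enlarging $\hf$ may raise $\tau$ or turn $\hht^{(3)}$ into a star, yet Propositions~\ref{prop-3} and~\ref{prop-4} need saturation.

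The paper avoids all of this. Since $\hht^{(3)}(\hf)$ is non-trivial, every vertex $x$ is missed by some $3$-cover $T$. Each member of $\hf(x)$ therefore meets the $3$-set $T\subset[n]\setminus\{x\}$, giving $\Delta(\hf)\le\binom{n-1}{k-1}-\binom{n-4}{k-1}$. Frankl's degree theorem (Theorem~\ref{thm-F87} with $\ell=3$) then yields $|\hf|\le\binom{n-1}{k-1}-\binom{n-4}{k-1}+\binom{n-4}{k-3}$. At $n=2k+1$ the comparison with $|\hg(2k+1,k)|$ reduces to $\binom{2k-3}{k-1}-\binom{2k-3}{k-3}-3k+4>0$, which is checked by induction from the value $3$ at $k=5$. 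No split into $\hs$, $\hr$, $\hk_3(4)$ and no saturation is needed.
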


Let $\Delta(\hf)$ denote the maximum degree of $\hf$, that is, $\Delta(\hf)=\max_{1\leq i\leq n} |\hf(i)|$. For the proof of Proposition \ref{prop-2k+1}, we need the following result of the first author.

\begin{thm}[\cite{F87-2}]\label{thm-F87}
Let $2\leq \ell\leq k$, $n>2k$. Suppose that $\hf\subset \binom{[n]}{k}$ is an intersecting family satisfying
\begin{align}
\Delta(\hf) \leq \binom{n-1}{k-1} -\binom{n-\ell-1}{k-1}.
\end{align}
Then
\begin{align}
|\hf| \leq \binom{n-1}{k-1}-\binom{n-\ell-1}{k-1}+\binom{n-\ell-1}{k-\ell}.
\end{align}
\end{thm}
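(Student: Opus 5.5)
The plan is to split $\hf$ at a vertex of maximum degree and to reduce the problem to a cross-intersecting pair, which Lemma \ref{lem-hilton} lets us replace by lexicographic initial segments. Relabel so that $\Delta(\hf)=|\hf(1)|$. Put
\[
a:=|\hf(1)|\leq \binom{n-1}{k-1}-\binom{n-\ell-1}{k-1},\qquad b:=|\hf(\bar 1)|.
\]
Then $\hf(1)\subset\binom{[2,n]}{k-1}$ and $\hf(\bar 1)\subset\binom{[2,n]}{k}$ are cross-intersecting. Since $n-1\geq (k-1)+k$, Lemma \ref{lem-hilton} shows that $\hl(n-1,k-1,a)$ and $\hl(n-1,k,b)$, taken on the ground set $[2,n]$, are cross-intersecting as well. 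It therefore suffices to bound $a+\beta(a)$, where $\beta(a)$ is the largest $b$ for which these two lex families cross-intersect.

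\textbf{Step 1 (the target value).} At the endpoint $a_\ell=\binom{n-1}{k-1}-\binom{n-\ell-1}{k-1}$, the family $\hl(n-1,k-1,a_\ell)$ consists exactly of the $(k-1)$-subsets of $[2,n]$ meeting $[2,\ell+1]$. A $k$-set meets all of them iff it contains $[2,\ell+1]$. Hence $\beta(a_\ell)=\binom{n-\ell-1}{k-\ell}$, which gives exactly the claimed bound.

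\textbf{Step 2 (piecewise behaviour of $a+\beta(a)$).} Write $a$ in the cascade form
\[
a=\sum_{i=1}^{j}\binom{n-1-i}{k-2}+a',
\]
which reflects how $\hl(n-1,k-1,a)$ is built. As $a$ runs through a block between consecutive values $a_j$ and $a_{j+1}$, the family $\hl(n-1,k,\beta(a))$ is a union of pieces with a fixed initial part. In that range, adding one set to the $(k-1)$-family removes at least as many $k$-sets as it adds, because $\binom{m}{k}\geq\binom{m}{k-1}$ for $m\geq 2k-1$. Consequently $a+\beta(a)$ is non-increasing inside each block. From this I will conclude that
\[
\max_{a_{j}\leq a\leq a_{\ell}}\bigl(a+\beta(a)\bigr)
\]
is attained at $a=a_\ell$, using the comparison
\[
a_{j}+\binom{n-j-1}{k-j}\leq a_{j+1}+\binom{n-j-2}{k-j-1},
\]
which follows from $n>2k$.

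\textbf{Step 3 (main obstacle: small $a$).} For $a<a_2$, and especially $a=1$, the quantity $\beta(a)$ is huge. So the pair bound alone fails, and the fact that $1$ has \emph{maximum} degree must be used. To handle this I will first make $\hf$ shifted, processing shifts in an order that keeps $\Delta(\hf)$ attained at vertex $1$. If some shift would push the degree of $1$ above the hypothesis bound, I stop and exploit that the current family already has $a$ close to $a_\ell$.

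For shifted $\hf$, the family $\hf(\bar 1)$ is itself intersecting with degrees at most $a$. The point is that $b$ cannot exceed roughly $\frac{n-1}{k}\,a$, since $k\,|\hf(\bar 1)|$ is at most the sum of the degrees of vertices $2,\dots,n$ inside $\hf(\bar 1)$. Shiftedness should sharpen this to $b\leq a$ once $a<a_2$. Then $|\hf|\leq 2a\leq a_2\leq a_\ell+\binom{n-\ell-1}{k-\ell}$, which disposes of small $a$, while Step 2 handles $a\geq a_2$.

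I expect this small-$a$ regime, together with the need to keep the degree constraint valid during shifting, to be the genuinely delicate part. If the degree inequality $b\leq a$ proves too weak there, I would fall back on Theorem \ref{thm-ft92} applied to the cross-intersecting pair $\hf(1),\hf(\bar 1)$.
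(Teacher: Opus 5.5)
The paper does not prove this statement; it is quoted from Frankl's 1987 paper. Judged on its own, your argument has a genuine gap, and it sits exactly in the regime you flagged.

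Write $M_j:=a_j+\binom{n-j-1}{k-j}$, so the target is $M_\ell$.

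First, the step $2a\le a_2$ in Step~3 is false once $a>a_2/2$. Since $2a_2-M_2=\binom{n-2}{k-2}>0$, for $\ell=2$ the interval $M_2/2<a<a_2$ is covered by neither step.

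More seriously, the only information you keep is not enough. That information is that $\hf(1),\hf(\bar 1)$ are cross-intersecting (so $b\le\beta(a)$) and that $b\le a$. Take $\ell=2$, $k=9$, $n=19$ and $a=\binom{17}{7}+\binom{14}{5}=21450<a_2=30888$.
\begin{itemize}
\item $\hl(n-1,k-1,a)$ consists of the $8$-subsets of $[2,19]$ that contain $2$ or contain $\{3,4,5\}$.
\item Hence $\beta(a)=\binom{17}{8}-\binom{14}{8}=21307$.
\item So $a+\min\{a,\beta(a)\}=42757>M_2=42328$.
\end{itemize}
In general, for $k\ge4$ and $a=\binom{n-2}{k-2}+\binom{n-5}{k-4}$, one gets $a+\beta(a)=M_2+\binom{n-5}{k-2}-\binom{n-5}{k-3}$. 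The lex partner here lies inside the star of $2$. What you discard when passing to $\hl$ is that \emph{every} vertex has degree at most $a$, and exploiting that needs a genuinely new argument.

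Shifting does not supply this for free.
\begin{itemize}
\item The hypothesis $\Delta(\hf)\le a_\ell$ is not preserved by $S_{1j}$, which can raise $\deg(1)$ by up to $\deg(j)$. So ``stop and exploit that $a$ is already close to $a_\ell$'' has no basis.
\item You cannot skip shifting either, because $b\le a$ fails for general intersecting families: the Fano plane has $a=3<b=4$.
\item For shifted $\hf$, $b\le a$ does hold, via $\partial\hf(\bar 1)\subset\hf(1)$ and Katona's intersecting shadow theorem, but you need to say so.
\item Your fallback, Theorem~\ref{thm-ft92} applied to $\hf(1),\hf(\bar 1)$, only gives $\binom{n-1}{k}-\binom{n-k}{k}+1$, which exceeds even the Hilton--Milner bound.
\end{itemize}

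Step~2 is also misargued, though it can be repaired. The sum $a+\beta(a)$ is not non-increasing on $[a_j,a_{j+1}]$. Your own comparison $M_j\le M_{j+1}$, which is strict for $j\ge 3$, contradicts that. What is true, and suffices, is the following. For $a_j<a\le a_{j+1}$ with $j\ge 2$, every $k$-set counted by $\beta(a)$ contains $[2,j+1]$. Those that also contain $j+2$ number $\binom{n-j-2}{k-j-1}$. For the remaining ones, the traces $B\setminus[2,j+1]$ form a cross-intersecting pair with the $(k-2)$-sets $G\subset[j+3,n]$ for which $\{j+2\}\cup G$ was added; this is a pair of $(k-j)$-sets and $(k-2)$-sets on $[j+3,n]$. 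Since $k-j\le k-2$ and $n\ge 2k$, Theorem~\ref{thm-ft92} bounds their total by $\binom{n-j-2}{k-2}$ (trivially if the $(k-j)$-family is empty). Therefore $a+\beta(a)\le M_{j+1}\le M_\ell$.

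So your plan, once repaired, handles $a_2\le a\le a_\ell$. The range $a<a_2$, where the real content of Frankl's theorem lies, remains unproved.
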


\begin{proof}[Proof of Proposition \ref{prop-2k+1}]
Since $\hht^{(3)}(\hf)$ is non-trivial, we infer that $\Delta(\hf) \leq \binom{n-1}{k-1} -\binom{n-4}{k-1}$. Then Theorem \ref{thm-F87} implies
\[
|\hf| \leq \binom{n-1}{k-1}-\binom{n-4}{k-1}+\binom{n-4}{k-3}.
\]
Since for $n=2k+1$,
\[
|\hg(n,k)| = \binom{n-1}{k-1}-\binom{k+1}{k-1}-\binom{k}{k-1}+\binom{1}{k-1}+\binom{k-1}{k-3}+3=\binom{n-1}{k-1}-3k+4
\]
and
\[
\binom{n-1}{k-1}-\binom{n-4}{k-1}+\binom{n-4}{k-3}=\binom{n-1}{k-1} -\binom{2k-3}{k-1}+\binom{2k-3}{k-3},
\]
we are left to show that
\[
f(k):=\binom{2k-3}{k-1}-\binom{2k-3}{k-3} -3k+4 \geq 0.
\]
By direct computation we have $f(5)=3>0$. Moreover,
\begin{align*}
f(k+1) &=\binom{2k-1}{k}-\binom{2k-1}{k-2} -3k+1\\[3pt]
&=\binom{2k-3}{k}+2\binom{2k-3}{k-1}+\binom{2k-3}{k-2}-\left(\binom{2k-3}{k-2}+2\binom{2k-3}{k-3}+\binom{2k-3}{k-4}\right)\\[3pt]
&\qquad \qquad -3k+1\\[3pt]
&=2\binom{2k-3}{k-1}-\binom{2k-3}{k-3}-\binom{2k-3}{k-4}-3k+1\\[3pt]
&=f(k)+\binom{2k-3}{k-1}-\binom{2k-3}{k-4}-3.
\end{align*}
Then for $k\geq 5$,
\[
f(k+1) -f(k) = \binom{2k-3}{k-1}-\binom{2k-3}{k-4}-3= \frac{6(k-1)}{k(k+1)}\binom{2k-3}{k-1} -3>0.
\]
Thus the proposition follows.
\end{proof}

\section{Proof of Theorem \ref{thm-main}}

Let $\hf\subset \binom{[n]}{k}$ be a saturated intersecting family with $\tau(\hf)\geq 3$.
 By Propositions \ref{prop-1}, \ref{prop-3}, \ref{prop-4} and Theorems \ref{thm-3}, we may assume that $\hht^{(3)}(\hf)$  contains either an isomorphic copy of $\hs$ or an isomorphic copy of $\hr$.

Let $\hp(\hr)$ and $\hp(\hs)$ be the family of 2-covers of $\hr$ and $\hs$, respectively.

\begin{claim}
If $\hr \subset \hht^{(3)}(\hf)$, then $|F\cap [5]|\geq 3$ or $F\cap [5]\in \hp(\hr)$ for all $F\in \hf$. Similarly, if $\hs \subset \hht^{(3)}(\hf)$, then $|F\cap [6]|\geq 3$ or $F\cap [6]\in \hp(\hs)$ for all $F\in \hf$.
\end{claim}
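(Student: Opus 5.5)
The claim follows from one basic fact: every member of $\hht^{(3)}(\hf)$ is a cover of $\hf$. So each $F\in\hf$ meets each of the three triples of $\hr$ (respectively $\hs$). I will then look only at the trace $G=F\cap[5]$ (respectively $F\cap[6]$). Every triple of $\hr$ lies in $[5]$, and every triple of $\hs$ lies in $[6]$. Hence $G$ itself meets every triple, so $G$ is a cover of the 3-graph $\hr$ (respectively $\hs$).

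Next I split by the size of $G$. If $|G|\geq 3$, the first alternative of the claim holds and there is nothing to prove. So assume $|G|\leq 2$. Since $G$ covers the 3-graph, $|G|\geq \tau(\hr)$ (respectively $\tau(\hs)$). It therefore suffices to check that $\tau(\hr)=\tau(\hs)=2$, that is, that no single vertex covers either family.

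For $\hr=\{\{1,2,3\},\{1,4,5\},\{2,3,5\}\}$, the vertices $1,2,3$ all miss $\{1,4,5\}$ or $\{2,3,5\}$, and the vertices $4,5$ both miss $\{1,2,3\}$. For $\hs=\{\{1,2,3\},\{1,4,5\},\{2,4,6\}\}$, no vertex lies in all three triples, because the three triples have empty common intersection. Hence $|G|=2$. Then $G$ is a 2-element cover of $\hr$ (respectively $\hs$), i.e. $G\in\hp(\hr)$ (respectively $G\in\hp(\hs)$), which is the second alternative.

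There is essentially no obstacle here. The only point needing care is that the whole triples of $\hr$, $\hs$ sit inside $[5]$, $[6]$, so intersecting $F$ with these ground sets loses no information about covering. It is also worth noting, though not needed for the claim, that $\hp(\hr)$ and $\hp(\hs)$ can be listed explicitly. For example, $\hp(\hr)=\{\{1,2\},\{1,3\},\{1,5\},\{2,4\},\{2,5\},\{3,4\},\{3,5\}\}$, and $\hp(\hs)=\{\{1,2\},\{1,4\},\{1,6\},\{2,4\},\{2,5\},\{3,4\}\}$. These lists are presumably what will feed the subsequent counting via Propositions~\ref{prop-3.1}--\ref{prop-3.4}.
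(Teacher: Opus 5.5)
Your proof is correct and follows the same argument as the paper. Each $F\in\hf$ meets every triple of $\hr$ (resp.\ $\hs$), so its trace on $[5]$ (resp.\ $[6]$) covers a 3-graph with covering number $2$; hence it has size at least $2$, and if it has size exactly $2$ it lies in $\hp(\hr)$ (resp.\ $\hp(\hs)$). You spell out the non-triviality check $\tau(\hr)=\tau(\hs)=2$ that the paper uses implicitly, and your explicit lists of $\hp(\hr)$ and $\hp(\hs)$ agree with the paper's.
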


\begin{proof}
Since $\hr$ is non-trivial, $|F\cap [5]|\geq 2$ for all $F\in \hf$. If $|F\cap [5]|=2$ then $F\cap [5]$ has to be a 2-cover of $\hr$.
\end{proof}

Now we distinguish two cases.

\vspace{3pt}
{\bf Case 1.} $\hht^{(3)}(\hf)$ contains a copy of $\hr$.
\vspace{3pt}

Without loss of generality assume $\hr\subset \hht^{(3)}(\hf)$.
Note that $|[5]|=3+2$ implies $\hp(\hr)=\binom{[5]}{2}\setminus \{[5]\setminus R\colon R\in \hr\}$. That is,
\[
\hp(\hr)=\left\{(1,2),(1,3),(1,5),(2,4),(2,5),(3,4),(3,5)\right\}.
\]
Let us define an auxiliary graph $G$ with the vertex set $\hp(\hr)$ and the edge set
\[
\{(P,P')\colon P,P'\in \hp(\hr),\ P\cap P'=\emptyset \}.
\]
\begin{figure}[H]
\centering
\includegraphics[width=0.3\textwidth]{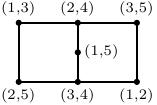}\\[3pt]
\caption{The auxiliary graph $G$.}
\end{figure}
Let
\[
\hi=\left\{P\in \hp(\hr)\colon |\hf(P,[5])| > \binom{n-6}{k-3} \right\}.
\]
\begin{claim}\label{claim-6}
 $G$ can be partitioned into 3 pairwise disjoint edges and a vertex $P_0$ that is not in $\hi$.
\end{claim}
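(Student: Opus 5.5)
The plan is to prove two things: every vertex of $G$ can serve as the unmatched vertex, and $\hi$ is an independent set in $G$. Together these give the claim, since $G$ has edges, so $\hi\neq \hp(\hr)$ and some $P_0\notin\hi$ exists. For this $P_0$, $G-P_0$ then has a perfect matching.

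\emph{The combinatorics of $G$.} I would first list the edges explicitly. Writing pairs as $ij$, the edges are
$12$--$34$, $12$--$35$, $13$--$24$, $13$--$25$, $15$--$24$, $15$--$34$, $24$--$35$, $25$--$34$.
I would then check, for each of the seven vertices $P$, that $G-P$ has a perfect matching:
\begin{itemize}
\item $P=12$: $\{35,24\},\{13,25\},\{15,34\}$;
\item $P=13$: $\{25,34\},\{15,24\},\{12,35\}$;
\item $P=15$: $\{13,24\},\{25,34\},\{12,35\}$;
\item $P=24$: $\{15,34\},\{13,25\},\{12,35\}$;
\item $P=25$: $\{13,24\},\{15,34\},\{12,35\}$;
\item $P=34$: $\{15,24\},\{13,25\},\{12,35\}$;
\item $P=35$: $\{12,34\},\{13,25\},\{15,24\}$.
\end{itemize}
This is a routine finite verification.

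\emph{$\hi$ is independent.} Suppose $P,P'\in\hi$ with $P\cap P'=\emptyset$. Since $\hf$ is intersecting, the families $\ha=\hf(P,[5])$ and $\hb=\hf(P',[5])$ are cross-intersecting $(k-2)$-uniform families on the ground set $[6,n]$. This ground set has $m=n-5\geq 2k-4$ elements, because $n\geq 2k+1$. By Lemma \ref{lem-hilton}, the lexicographic families $\hl(m,k-2,|\ha|)$ and $\hl(m,k-2,|\hb|)$ are also cross-intersecting. Now $|\ha|,|\hb|>\binom{n-6}{k-3}=\binom{m-1}{k-3}$, which is exactly the size of the full star at the first element. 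Hence each of the two lexicographic families contains the entire star at element $1$ together with the next set $[2,k-1]$. Since $m\geq 2k-4$, the star contains the set $\{1\}\cup[k,2k-4]$, which is disjoint from $[2,k-1]$. This contradicts cross-intersection, so no edge of $G$ has both endpoints in $\hi$.

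\emph{Conclusion and main obstacle.} Since $\hi$ is independent and $G$ has edges, some $P_0\in\hp(\hr)\setminus\hi$ exists. The matching listed above for $P_0$ gives the required partition. The only delicate point is the lexicographic step. It needs the size condition $m\geq 2(k-2)$ so that the disjoint set $\{1\}\cup[k,2k-4]$ exists inside the star. This is where $n>2k$ is used, and it is the step I would double-check.
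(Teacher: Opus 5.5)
Your proof is correct and follows the paper's approach. In both, $\hi$ is independent in $G$ because disjoint $P,P'$ give cross-intersecting families, and $G-P_0$ then has a perfect matching for a suitable $P_0\notin\hi$. The paper checks only two deletions: $P_0=(1,5)$, where $G-(1,5)$ is a $C_6$, or otherwise $P_0=(3,4)$, a neighbour of $(1,5)$. You instead verify all seven, and your Hilton-lemma argument supplies the justification of the independence of $\hi$ that the paper leaves implicit.
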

\begin{proof}
Since $\hf(P,[5])$, $\hf(P',[5])$ are cross-intersecting for any $P,P'\in \hp(\hr)$ with $P\cap P'=\emptyset$, we infer that $\hi$ is an independent set of $G$.  Then there are two cases: (i) $(1,5) \notin \hi$. Then $G- (1,5)$ is $C_6$ which can be partitioned into 3 edges. (ii) $(1,5) \in \hi$. Then $((1,3),(2,5))$, $((3,5),(1,2))$ and $((1,5),(2,4))$ are 3 pairwise disjoint edges in $G$ and $(3,4)\notin \hi$.  Thus, $G$ can be partitioned into 3 pairwise disjoint edges and a vertex $P_0$ that is not in $\hi$.
\end{proof}

Define $\hf_i=\{F\in \hf\colon |F\cap [5]|=i\}$, $i=2,3,\ldots,6$.
Since $P_0\notin \hi$, by Corollary \ref{cor-hilton} we infer that
\begin{align}\label{ineq-new1}
f_{P_0}+f_{[5]\setminus P_0}\leq \binom{n-6}{k-3}+\binom{n-6}{k-4}= \binom{n-5}{k-3}.
\end{align}
By \eqref{ineq-key}, for any $P, P'\in \hp(\hr)$ with $P\cap P'=\emptyset$ we have
\begin{align}\label{ineq-4.4}
f_P+f_{P'}+f_{[5]\setminus P}+f_{[5]\setminus P'} &\leq \binom{n-5}{k-2}-\binom{n-k-3}{k-2}+\binom{n-5}{k-3}+\binom{n-6}{k-4}\nonumber\\[3pt]
&= \binom{n-4}{k-2}-\binom{n-k-3}{k-2}+\binom{n-6}{k-4}.
\end{align}
By Claim \ref{claim-6}, \eqref{ineq-new1} and \eqref{ineq-4.4}
\begin{align*}
|\hf_2|+|\hf_3|&=3\left(\binom{n-4}{k-2}-\binom{n-k-3}{k-2}+\binom{n-6}{k-4}\right)+\binom{n-5}{k-3}+3\binom{n-5}{k-3}\\[3pt]
&=3\left(\binom{n-4}{k-2}-\binom{n-k-3}{k-2}\right)+3\binom{n-6}{k-4}+4\binom{n-5}{k-3}.
\end{align*}
Therefore we conclude that
 \begin{align*}
 |\hf| &= |\hf_2|+|\hf_3|+|\hf_4|+|\hf_5|\\[3pt]
 &\leq 3\left(\binom{n-4}{k-2}-\binom{n-k-3}{k-2}\right)+3\binom{n-6}{k-4}+4\binom{n-5}{k-3}
 +5\binom{n-5}{k-4}+\binom{n-5}{k-5}\\[3pt]
 &= 3\left(\binom{n-4}{k-2}-\binom{n-k-3}{k-2}\right)+4\binom{n-4}{k-3} +\binom{n-5}{k-4}+3\binom{n-6}{k-4}+\binom{n-5}{k-5}.
 \end{align*}
  For $k=5$ and $n\geq 13$ the above inequality yields,
 \[
 |\hf| \leq  \frac{1}{2}(16 n^2 - 196 n +636) <  \frac{1}{2}(21 n^2 - 295 n +1102)=|\hg(n,5)|.
 \]
 For $k=6$ and $n\geq 14$, we obtain
 \begin{align*}
 |\hf| &\leq  \frac{1}{6}(19 n^3- 408 n^2 + 3107 n -8322)\\[3pt]
  &<  \frac{1}{6}( 31 n^3 - 792 n^2+ 7157 n-22632)\\[3pt]
  &=|\hg(n,6)|.
 \end{align*}

 For $k=4$ and $n\geq 9$, by \eqref{ineq-new1} we have
\begin{align}\label{ineq-new11}
f_{P_0}+f_{[5]\setminus P_0}\leq \binom{n-5}{4-3}= n-5.
\end{align}
By Claim \ref{claim-6}, \eqref{ineq-new11} and \eqref{ineq-key2},
\begin{align*}
|\hf_2|+|\hf_3|&=3\times 3(n-6)+(n-5)+\left(\binom{5}{3}-6-1\right)(n-5)=13n-74.
\end{align*}
Therefore, we conclude that
 \begin{align}\label{ineq-4.11}
 |\hf| &= |\hf_2|+|\hf_3|+|\hf_4|\leq 13n-74+\binom{5}{4}=13n-69=|\hg(n,4)|.
 \end{align}
 By Proposition \ref{prop-3.4},  equality holds in \eqref{ineq-4.11} if and only if equality holds in \eqref{ineq-new11} and $f_P=2n-13$, $f_{P'}=0$ or $f_P=0$, $f_{P'}=2n-13$ for each pair $(P,P')$. If $f_P=2n-13$ then by $\tau(\hf)=3$ there exists $F_0\in \hf$ such that $P\cap F_0=\emptyset$ and  $F_0\cap [5]=2$. Then $P'':=F_0\cap [5]\in \hp(\hr)$. By \eqref{ineq-3.2}, $f_P=2n-13$ implies $f_{P''}=1$. Then $P''$ has to be $P_0$. However, $f_{P_0}=1$ would imply $f_{P_0}+f_{[5]\setminus P_0}\leq 3$, contradicting the fact equality holds in \eqref{ineq-new11}.  Thus there is no equality in \eqref{ineq-4.11}.

For the cases $k=5$, $11
\leq n\leq 12$ and $k=6$, $n=13$, we need to estimate $|\hf|$ in a different way.
By Claim \ref{claim-6}, \eqref{ineq-3.3} and \eqref{ineq-new1}, we infer that
\begin{align*}
|\hf_2|+|\hf_3|&=3\left(\binom{n-5}{k-2}+\binom{n-5}{k-3}\right)+\binom{n-5}{k-3}+3\binom{n-5}{k-3}.
\end{align*}
Thus,
\[
  |\hf| \leq 3\binom{n-5}{k-2}+7\binom{n-5}{k-3}+5\binom{n-5}{k-4}+\binom{n-5}{k-5}.
 \]
 Now for $k=5$ and $11\leq n\leq 12$,
 \[
 |\hf| \leq  \frac{1}{2}(n^3- 11 n^2+ 40 n-48) <  \frac{1}{2}(21 n^2 - 295 n +1102)=|\hg(n,5)|.
 \]
  For $k=6$ and $n=13$,
 \begin{align*}
 |\hf| &\leq  \frac{1}{24}(3 n^4- 50 n^3+ 309 n^2 - 838 n+ 840)\\[3pt]
 &<  \frac{1}{6}( 31 n^3 - 792 n^2+ 7157 n-22632)\\[3pt]
 &=|\hg(n,6)|.
 \end{align*}

\vspace{3pt}
{\bf Case 2.} $\hht^{(3)}(\hf)$ contains a copy of $\hs$ but no copy of $\hr$.
\vspace{3pt}

Without loss of generality assume $\hs\subset \hht^{(3)}(\hf)$. By Proposition \ref{prop-2k+1}, we may assume $n\geq 2k+2$ for $k\geq 5$ in this case.  Note that
\[
\hp(\hs) =\{(1,2),(3,4)\} \cup \{(2,4),(1,6)\}\cup \{(1,4),(2,5)\},
\]
i.e., it can be partitioned into three disjoint pairs. Let $P$, $P'$ be one of the pairs. Then by $n\geq 2k+2$ and \eqref{ineq-key},
\begin{align}\label{ineq-2.2}
f_P+f_{P'}+ f_{[6]\setminus P}+f_{[6]\setminus P'}\leq\binom{n-6}{k-2}-\binom{n-k-4}{k-2}+\binom{n-6}{k-4}+\binom{n-7}{k-5}.
\end{align}
It follows that
 \begin{align}\label{ineq-4.5}
 |\hf_2|+|\hf_4| &\leq 3\left(\binom{n-6}{k-2}-\binom{n-k-4}{k-2}+\binom{n-6}{k-4}+\binom{n-7}{k-5}\right)+9\binom{n-6}{k-4}\nonumber\\[3pt]
 &\leq 3\left(\binom{n-6}{k-2}-\binom{n-k-4}{k-2}\right)+12\binom{n-6}{k-4}+3\binom{n-7}{k-5}.
 \end{align}

 For any $T\in \binom{[6]}{3}$, by \eqref{ineq-sperner} we have
 \[
f_T+f_{[6]\setminus T} \leq \binom{n-6}{k-3}.
 \]
 It follows that $|\hf_3|\leq \frac{1}{2}\binom{6}{3}\binom{n-6}{k-3}=10\binom{n-6}{k-3}$. Thus,
 \begin{align*}
 |\hf| &=|\hf_2|+|\hf_3|+|\hf_4|+|\hf_5|+|\hf_6|\\[3pt]
 & \leq 3\left(\binom{n-6}{k-2}-\binom{n-k-4}{k-2}\right)+10\binom{n-6}{k-3}
 +12\binom{n-6}{k-4}+3\binom{n-7}{k-5}\\[3pt]
 &\qquad\qquad+6\binom{n-6}{k-5}+\binom{n-6}{k-6}\\[3pt]
 & = 3\left(\binom{n-6}{k-2}-\binom{n-k-4}{k-2}\right)+\binom{n-3}{k-3}
 +3\binom{n-4}{k-3}+6\binom{n-5}{k-3}-3\binom{n-7}{k-4}.
 \end{align*}

 For $k=5$ and $n\geq 12$,
 \[
 |\hf| \leq  \frac{1}{2}(19 n^2- 259 n +948) <  \frac{1}{2}(21 n^2 - 295 n +1102)=|\hg(n,5)|.
 \]
 For $k=6$ and $n\geq 14$,
 \begin{align*}
 |\hf| &\leq  \frac{1}{6}(22 n^3- 516 n^2+ 4328 n-12786)\\[3pt]
       &<  \frac{1}{6}( 31 n^3 - 792 n^2+ 7157 n-22632)\\[3pt]
       &=|\hg(n,6)|.
 \end{align*}

We are left with the $k=4$ case. Note that if $T \notin \hht^{(3)}(\hf)$ for some $T\in \binom{[6]}{3}$, then there exists $F_0\in \hf$ such that $F_0\cap T=\emptyset$. As $|F_0\cap [6]|\geq 2$, it follows that $f_T\leq 2$.
If $T$, $[6]\setminus T\notin \hht^{(3)}(\hf)$, then by the cross-intersecting property $f_T+f_{[6]\setminus T}\leq 2$. Let $\hht=\hht^{(3)}(\hf)\cap \binom{[6]}{3}$. Then
\begin{align}\label{ineq-4.1}
|\hf_3|\leq |\hht|(n-6)+\left(\frac{1}{2}\binom{6}{3}-|\hht|\right)2 =|\hht|(n-8)+20.
\end{align}

\begin{claim}\label{claim-5}
$|\hht| \leq 4$.
\end{claim}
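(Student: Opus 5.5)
The plan is to prove Claim~\ref{claim-5} by a finite case analysis inside $\binom{[6]}{3}$, using only two facts. First, $\hht^{(3)}(\hf)$ is intersecting by Proposition~\ref{prop-1.4}, since $\hf$ is saturated. Second, we are in Case~2, so $\hht^{(3)}(\hf)$ contains $\hs=\{\{1,2,3\},\{1,4,5\},\{2,4,6\}\}$ but no copy of $\hr$. I will show that $\hht$ contains at most one triple besides the three members of $\hs$, which gives $|\hht|\leq 4$.

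\emph{Forbidden pattern.} A copy of $\hr$ consists of two triples $T,T'$ with $T\cap T'=\{x\}$, together with a third triple $T''$ satisfying $x\notin T''$ and $T''\subset T\cup T'$. So $\hr$-freeness says: whenever $T,T'\in\hht$ meet in exactly one point $x$, no member of $\hht$ avoiding $x$ lies in the $5$-set $T\cup T'$. Apply this to the three pairs of $\hs$.
\begin{itemize}
\item The pair $123,145$ has union $[5]$ and common point $1$. Hence every $X\in\hht$ with $1\notin X$ must contain $6$.
\item The pair $123,246$ has common point $2$ and union $[6]\setminus\{5\}$. Hence every $X\in\hht$ with $2\notin X$ must contain $5$.
\item The pair $145,246$ has common point $4$ and union $[6]\setminus\{3\}$. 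Hence every $X\in\hht$ with $4\notin X$ must contain $3$.
\end{itemize}

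\emph{Listing the candidates.} Combine these three implications with the requirement that $X$ meets each of $123$, $145$, $246$. This leaves a short explicit list of candidate triples $X\in\binom{[6]}{3}\setminus\hs$; I would organise the enumeration according to which of $1,2,4$ lie in $X$. For instance, $124$ survives. Any $X$ missing exactly one of $1,2,4$ is forced to contain the corresponding partner $6$, $5$ or $3$. Missing two of $1,2,4$ forces two partners plus a third element, and such triples must still meet all three members of $\hs$.

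\emph{Excluding pairs of candidates.} For each candidate $X$, record the new pairs $\{X,T\}$ with $T\in\hs$ and $|X\cap T|=1$; each gives a further forbidden configuration of the same type. Then check every pair of candidates $X,Y$. For each pair I expect one of two outcomes: either $X\cap Y=\emptyset$, contradicting the intersecting property, or $|X\cap Y|=1$ (or $|X\cap T|=1$ for some $T\in\hs$) and the remaining triple of the configuration lies in the union while avoiding the common point, which is a copy of $\hr$. Concluding that no two candidates can both belong to $\hht$ gives $|\hht|\leq 3+1=4$.

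\emph{Main obstacle.} The hard part is the last step: confirming that every pair of candidates is genuinely incompatible. If some pair survives both tests, I would fall back on a sharper statement that still suffices for \eqref{ineq-4.1}. Namely, I would use the complementary pairing $T\leftrightarrow[6]\setminus T$ and show that $\hht$ cannot contain both a triple and its complement, since they are disjoint and $\hht$ is intersecting. I would then bound the number of surviving complementary pairs directly.
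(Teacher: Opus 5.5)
Your proposal is correct, though as written you leave the enumeration undone and hedge on the last step. Carrying it out closes the argument at once.

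The three implications are $1\notin X\Rightarrow 6\in X$, $2\notin X\Rightarrow 5\in X$ and $4\notin X\Rightarrow 3\in X$. Combined with the requirement that $X$ meets $\{1,2,3\}$, $\{1,4,5\}$ and $\{2,4,6\}$, they leave exactly two triples outside $\hs$: $\{1,2,4\}$ and $\{3,5,6\}$. The triples $\{1,3,5\}$, $\{2,3,6\}$ and $\{4,5,6\}$ satisfy the implications, but each misses a member of $\hs$, namely $\{2,4,6\}$, $\{1,4,5\}$ and $\{1,2,3\}$ respectively. Every other triple violates one of the three implications. The two survivors are complementary, so the intersecting property alone rules out having both. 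No further $\hr$-check between candidates is needed, and your \emph{main obstacle} never arises.

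The paper runs the same finite check organised differently. It pairs each $T\in\binom{[6]}{3}$ with $[6]\setminus T$ and notes that $\hht$ contains at most one member of each of the $10$ pairs. It then exhibits $6$ complementary pairs neither of whose members can lie in $\hht$, because each would complete a copy of $\hr$ with two members of $\hs$. This gives $|\hht|\leq 10-6$, and your fallback is exactly this argument.

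Your version generates the exclusions systematically: each of the paper's $12$ excluded triples violates one of your three implications. It also yields a sharper structural statement: either $\hs\subset\hht\subset\hs\cup\{\{1,2,4\}\}$ or $\hs\subset\hht\subset\hs\cup\{\{3,5,6\}\}$. The paper's version is phrased directly in terms of complementary pairs, which is the structure used in \eqref{ineq-4.1}.
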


\begin{proof}
Since $\hf$ is saturated, by Proposition \ref{prop-1.4}, $\hht$ is intersecting. Then $|\hht\cap\{T,[6]\setminus T\}|\leq 1$ for all $T\in \binom{[6]}{3}$. Since $\hht$ contains no copy of $\hr$, no member of the following 6 pairs is in $\hht$:
\begin{align*}
&(\{2,3,4\},\{1,5,6\}), (\{2,3,5\},\{1,4,6\}), (\{2,4,5\},\{1,3,6\}),\\[3pt]
& (\{3,4,5\},\{1,2,6\}), (\{3,4,6\},\{1,2,5\}), (\{1,3,4\},\{2,5,6\}).
\end{align*}
E.g. if $\{2,3,4\}\in \hht$, then $\{1,2,3\}$, $\{2,3,4\}$ and $\{1,4,5\}$ form a copy of $\hr$; if $\{1,2,6\}\in \hht$ then $\{1,2,3\}$, $\{2,4,5\}$ and $\{1,4,5\}$ form a copy of $\hr$.
Thus $|\hht| \leq \frac{1}{2}\binom{6}{3}-6=4$.
\end{proof}

By \eqref{ineq-4.1} and Claim \ref{claim-5},  for $n\geq 9$ we have
\begin{align}\label{ineq-4.7}
|\hf_3|\leq |\hht|(n-8)+20\leq 4n -12.
\end{align}
If $n\geq 10$, by \eqref{ineq-4.5} we have
\begin{align}\label{ineq-4.6}
|\hf_2|+|\hf_4| \leq 6n-33.
\end{align}
Adding \eqref{ineq-4.6} and \eqref{ineq-4.7}, we conclude that
\[
|\hf|\leq 10n-45< 13n-69 =|\hg(n,4)|.
\]

Now assume  $n=9$ and $k=4$. If for each  $(P,P')$ of three disjoint pairs,
\[
f_P+f_{P'}+f_{[6]\setminus P}+f_{[6]\setminus P'}\leq 5,
\]
then
\[
|\hf_2|+|\hf_4| \leq 3\times 5+ \binom{6}{4}-6 =24.
\]
Together with \eqref{ineq-4.7}, we obtain that
\begin{align}\label{ineq-4.10}
|\hf|\leq 24+4\times 9 -12=48=|\hht(9,4)|.
 \end{align}
 If  equality holds in \eqref{ineq-4.10}, then
$f_P+f_{P'}=5$.  Since $\hf(P,[6]),\hf(P',[6])\subset \binom{\{7,8,9\}}{2}$, we have $f_P=3$, $f_{P'}=2$ or $f_P=2$, $f_{P'}=3$.  By the intersecting property,  $f_{P\cup \{x\}}+f_{P'\cup \{y\}} \leq 1$ and $f_{P\cup \{y\}}+f_{P'\cup \{x\}} \leq 1$, where $\{x,y\}=[6]\setminus (P\cup P')$. Then
\[
|\hf_3|\leq |\hht|(n-6)+\left(\frac{1}{2}\binom{6}{3}-|\hht|\right)2 -2\leq 22.
\]
Thus $|\hf|\leq 24+22=46$. Therefore there is no equality in \eqref{ineq-4.10}.

Note that $\hf(P,[6]),\hf(P',[6])\subset \binom{\{7,8,9\}}{2}$. It is easy to see that $f_P+f_{P'}+f_{[6]\setminus P}+f_{[6]\setminus P'}\leq 6$. Then
\begin{align}\label{ineq-4.8}
|\hf_2|+|\hf_4| \leq 3\times 6+ \binom{6}{4}-6 =27.
\end{align}
Suppose that $f_P+f_{P'}+f_{[6]\setminus P}+f_{[6]\setminus P'}=6$ for some $(P,P')$.  It follows that
$f_{[6]\setminus P}=f_{[6]\setminus P'}=0$ and $\hf(P,[6])=\hf(P',[6])= \binom{\{7,8,9\}}{2}$. Let $\{x,y\}=[6]\setminus (P\cup P')$. By the intersecting property,
\[
\hf(P\cup \{x\},[6])=\hf(P\cup \{y\},[6])=\hf(P'\cup \{x\},[6])=\hf(P'\cup \{y\},[6])=\emptyset.
\]
Then
\begin{align}\label{ineq-4.9}
|\hf_3|\leq |\hht|(n-6)+\left(\frac{1}{2}\binom{6}{3}-2-|\hht|\right)2\leq |\hht|(n-8)+16\leq 4n -16= 20.
\end{align}
Adding \eqref{ineq-4.6} and \eqref{ineq-4.7}, we conclude that $|\hf|\leq 27+20=47<|\hht(9,4)|$.

\section{Concluding remarks}

Combining the results of \cite{FW25} and the present paper,  for all $n\geq 2k\geq 6$,
\[
m(n,k,3) =\binom{n-1}{k-1}-\binom{n-k}{k-1}-\binom{n-k-1}{k-1}+\binom{n-2k}{k-1}+\binom{n-k-2}{k-3}+3.
\]
For $n=2k$ there are many intersecting families with covering number at least 3 and size $|\hg(n,k)|=\frac{1}{2}{2k \choose k}$. However, examining carefully the proofs one can show that for $n>2k\geq 8$, up to isomorphism
$\hg(n,k)$ is the only family attaining equality.

A complete solution to $m(n,k,s)$ for larger values of $s$ appears to be hopeless. For some partial results, we refer to \cite{FOT95}, \cite{FuruyaTakatou}, \cite{FW24} for  $m(n,k,4)$ and $m(n,k,5)$.

\vspace{8pt}
{\noindent \bf Acknowledgement.} The second author was supported by
National Natural Science Foundation of China Grant no. 12471316 and Natural Science Foundation of Shanxi Province Grant no. RD2500002993.

\end{document}